\renewcommand{\@seccntformat}[1]{{\csname the#1\endcsname}.\hspace{.5em}}
\newtheorem{thm}{Theorem}[section]
\newtheorem{prop}[thm]{Proposition}
\newtheorem{cor}[thm]{Corollary}
\newtheorem{remark}[thm]{Remark}
\newcommand{\pf}{\noindent{\it Proof.} }
\def\N{\mathbb{N}}
\def\des{{\rm des}}
\def\maj{{\rm maj}}
\renewcommand{\qed}{\hfill$\Box$\medskip}
\numberwithin{equation}{section}
\begin{document}

\renewcommand{\thefootnote}{*}

\begin{center}
{\Large\bf %A bibasic transformation formula
Basic and bibasic identities related to\\[5pt] divisor functions}
\end{center}

\vskip 2mm \centerline{Victor J. W. Guo$^1$  and Jiang Zeng$^{2}$}
\begin{center}
{\footnotesize $^1$School of Mathematical Sciences, Huaiyin Normal University, Huaian, Jiangsu 223300,
 People's Republic of China\\
{\tt jwguo1977@aliyun.com}\\[10pt]
$^2$Universit\'e de Lyon; Universit\'e Lyon 1; Institut Camille
Jordan, UMR 5208 du CNRS;\\ 43, boulevard du 11 novembre 1918,
F-69622 Villeurbanne Cedex, France\\
{\tt zeng@math.univ-lyon1.fr } }
\end{center}

\vskip 0.7cm \noindent{\small{\bf Abstract.}}  Using basic hypergeometric functions and partial fraction decomposition we
give  a new kind of generalization of identities due to Uchimura, Dilcher, Van Hamme,
Prodinger, and Chen-Fu related to divisor functions.
An identity relating Lambert series to Eulerian polynomials is also proved.
%We prove  the bibasic transformation formula
%$$
%\sum_{k=0}^m\frac{(-1)^k p^{{k+1\choose 2}-rk}}{(p;p)_{k}(p;p)_{m-k}(xp^{k};q)_{n+1}}
%=x^r\sum_{k=0}^n\frac{(-1)^k q^{{k+1\choose 2}+rk}}{(q;q)_{k}(q;q)_{n-k}(xq^{k};p)_{m+1}},
%$$
%where $0\leqslant r\leqslant m$ and $(a;q)_n=(1-a)(1-aq)\cdots(1-aq^{n-1})$, and
%derive some further $q$-identities, such as
%\begin{align*}
%\sum_{k=1}^{n}\frac{(-1)^{k-1}q^{{k\choose 2}+k(m-r)}}{(q;q)_k (q;q)_{n-k}(1-q^k)^m}
%=\sum_{j=0}^{m}{r\choose m-j}
%\sum_{k=1}^{n}\frac{(-1)^{k-1}q^{{k\choose 2}+kj}}{(q;q)_k (q;q)_{n-k}(1-q^k)^j},
%\end{align*}
%\begin{align*}
%\sum_{k=1}^{n}(-1)^{k-1}{n\brack k}_q\frac{q^{{k\choose 2}+k(m-r)}}{(1-q^k)^m}
%&=\sum_{j=0}^{m}{r\choose m-j}
%\sum_{k=1}^{n}(-1)^{k-1}{n\brack k}_q\frac{q^{{k\choose 2}+kj}}{(1-q^k)^j},
%\end{align*}
%where $m,n\geqslant 1$ and $0\leqslant r\leqslant m+n-1$.
%and ${n\brack k}_q=\frac{(q;q)_n}{(q;q)_k (q;q)_{n-k}}$.

\vskip 3mm \noindent {\it Keywords}:
divisor functions; Lambert series; Prodinger's identity; Uchimura's identity;
Dilcher's identity; Carlitz's $q$-Eulerian polynomials.

\vskip 3mm \noindent {\it 2000 Mathematics Subject Classifications}: 11B65, 30E05

%\tableofcontents

\section{Introduction}

The divisor function $\sigma_m(n)$ for a natural number $n$ is defined as the
sum of the $m$th powers of the (positive) divisors of $n$, i.e.,
%\begin{align*}
$\sigma_m(n)=\sum_{d|n}d^m$.
%\end{align*}
Throughout this paper, we assume that $|q|<1$. The generating function of $\sigma_m(n)$ has
an explicit  Lambert series expansion (see \cite{Apostol})
\begin{align}
\sum_{n=1}^\infty \sigma_m(n)q^n=\sum_{n=1}^\infty \frac{n^m q^n}{1-q^n}. \label{eq:divisor}
\end{align}

In 1981, Uchimura \cite{Uchimura81} rediscovered  an  identity due to
 Kluyver \cite{Kluyver}  (see also Dilcher \cite{Dilcher}):
\begin{align}
\sum_{k=1}^{\infty}(-1)^{k-1}\frac{q^{k+1\choose 2}}{(q;q)_k(1-q^k)}
=\sum_{k=1}^\infty \frac{q^k}{1-q^k},  \label{eq:u81}
\end{align}
where $(a;q)_n=(1-a)(1-aq)\cdots(1-aq^{n-1})$ for $n\geqslant 0$.  Since then,
many authors have given different generalizations of \eqref{eq:u81}
(see \cite{Ando,AU, Dilcher, FL, FL2, GP, GZh, Hoffman, Hamme, IS, Prodinger, Prodinger2, Uchimura81, Uchimura84, Zeng}).
For example, Van Hamme \cite{Hamme} gave a finite form of \eqref{eq:u81} as follows:
\begin{align}
\sum_{k=1}^{n}(-1)^{k-1}{n\brack k}_q\frac{q^{k+1\choose 2}}{1-q^k}
=\sum_{k=1}^n \frac{q^k}{1-q^k}, \label{eq:hamme}
\end{align}
where the $q$-binomial coefficients ${n\brack k}_q$  are defined by
$$
{n\brack k}_q=\frac{(q;q)_n}{(q;q)_k (q;q)_{n-k}}.
$$
Uchimura \cite{Uchimura84} proved that
\begin{align}
\sum_{k=1}^{n}(-1)^{k-1}{n\brack k}_q\frac{q^{k+1\choose 2}}{1-q^{k+m}}
=\sum_{k=1}^n \frac{q^k}{1-q^k}{k+m\brack m}_q^{-1},\quad m\geqslant 0.  \label{eq:uch}
\end{align}
Dilcher \cite{Dilcher} established the following multiple series generalization of \eqref{eq:rdiv}:
\begin{align}
\sum_{k=1}^{n}(-1)^{k-1}{n\brack k}_q\frac{q^{{k\choose 2}+km}}{(1-q^k)^m}
=\sum_{1\leqslant k_1\leqslant\cdots\leqslant k_m\leqslant n} \frac{q^{k_1+\cdots+k_m}}{(1-q^{k_1})\cdots(1-q^{k_m})}.  \label{eq:dilch}
\end{align}
Prodinger \cite{Prodinger} proved that
\begin{align}
\sum_{\substack{k=0\\ k\neq m}}^{n}(-1)^{k-1}{n\brack k}_q\frac{q^{k+1\choose 2}}{1-q^{k-m}}
=(-1)^m q^{m+1\choose 2}{n\brack m}_q\sum_{\substack{k=0\\ k\neq m}}^{n} \frac{q^{k-m}}{1-q^{k-m}},\quad 0\leqslant m\leqslant n.
\label{eq:prodinger}
\end{align}
 Using partial fraction decomposition the second author \cite[(7)]{Zeng} obtained
 the following common generalization of  Dilcher's identity~\eqref{eq:dilch} and of
some  identities due to Fu and Lascoux~\cite{FL,FL2}:
\begin{align}
 \sum_{k=i}^{n}(-1)^{k-i}{n\brack k}_q{k\brack i}_q\frac{q^{{k-i\choose 2}+km}}{(1-zq^k)^{m}}
=\frac{q^i(q;q)_{i-1}(q;q)_n}{(q;q)_i(zq;q)_n}
h_{m-1}\biggl(\frac{q^i}{1-zq^i}, \ldots,  \frac{q^n}{1-zq^n} \biggr),\label{eq:flz}
\end{align}
where $1\leqslant i\leqslant n$ and $h_k(x_1, \ldots, x_n)$ is the $k$-th homogeneous symmetric polynomial in $x_1, \ldots, x_n$
 defined by
\begin{align*}
h_k(x_1, \ldots, x_n)=\sum_{1\leqslant i_1\leqslant \cdots \leqslant i_k\leqslant n} x_{i_1}\cdots x_{i_k}=\sum_{\alpha_1+\cdots +\alpha_n=k}x_1^{\alpha_1}\cdots x_n^{\alpha_n}.
\end{align*}
We note that Ismail and Stanton \cite[Theorem 2.2]{IS} have   rediscovered  the $i=1$ case of
\eqref{eq:flz} as well as some other results in \cite{Zeng}.

In this paper we shall give a different kind of generalizations of \eqref{eq:uch}--\eqref{eq:prodinger}.
Our  starting point  is an identity of  Chen and Fu \cite[(3.3)]{CF},
which corresponds to  the $(r, x)=(0,1)$  case of the following result.
\begin{thm}\label{thm:01}
Let $m,n\geqslant 0$ and $0\leqslant r\leqslant m$. Then
\begin{align}
\sum_{k=0}^m\frac{(-1)^k p^{{k+1\choose 2}-rk}}{(p;p)_{k}(p;p)_{m-k}(xp^{k};q)_{n+1}}
=x^r\sum_{k=0}^n\frac{(-1)^k q^{{k+1\choose 2}+rk}}{(q;q)_{k}(q;q)_{n-k}(xq^{k};p)_{m+1}}.
\label{eq:new}
\end{align}
\end{thm}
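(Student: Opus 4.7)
The plan is to prove \eqref{eq:new} by partial fraction decomposition in $x$. Both sides are rational functions of $x$, and when brought to a common denominator they share
$$D(x) = \prod_{i=0}^{m}\prod_{j=0}^{n}(1-xp^i q^j),$$
whose zeros $x=p^{-i}q^{-j}$ are simple for generic $p,q$. I will check that both sides are proper (i.e.\ vanish at infinity) and have matching residues at each pole.

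The properness check is immediate: each summand on the LHS contributes a numerator of degree at most $m(n+1) < \deg D = (m+1)(n+1)$ after clearing $D(x)$; on the RHS the analogous bound is $r + n(m+1)$, and the hypothesis $r\leqslant m$ is exactly what makes this strictly less than $(m+1)(n+1)$. So both sides are proper rational functions with the same denominator $D(x)$, and the identity reduces to matching their residues at every simple pole.

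For the residue comparison at $x=p^{-i}q^{-j}$, on the LHS only the summand $k=i$ is singular: extracting $(1-xp^iq^j)$ from $(xp^i;q)_{n+1}$ reduces the computation to evaluating $\prod_{j'\neq j}(1-q^{j'-j})^{-1}$, which a short calculation turns into $(-1)^j q^{{j+1\choose 2}}/[(q;q)_j(q;q)_{n-j}]$. Dually, only the summand $k=j$ on the RHS is singular, and an analogous product in $p$ yields $(-1)^i p^{{i+1\choose 2}}/[(p;p)_i(p;p)_{m-i}]$; the extra factor $x^r$ evaluates to $p^{-ri}q^{-rj}$ at the pole, so the $q^{-rj}$ cancels against the $q^{rj}$ appearing in the summand. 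Collecting everything, both residues equal
$$\frac{(-1)^{i+j}\,p^{{i+1\choose 2}-ri}\,q^{{j+1\choose 2}}}{(p;p)_i(p;p)_{m-i}(q;q)_j(q;q)_{n-j}},$$
which completes the proof.

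The argument is conceptually clean, so the only real obstacle is bookkeeping: evaluating the two kinds of $q$-Pochhammer products $\prod_{j'\neq j}(1-q^{j'-j})$ and $\prod_{i'\neq i}(1-p^{i'-i})$, and tracking the cancellation between $x^r$ on the right and the exponents $p^{-rk}$ and $q^{rk}$ in the two sums. This cancellation is precisely what makes the identity non-trivial and is what forces the degree condition $r\leqslant m$.
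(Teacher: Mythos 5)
Your proof is correct, but it takes a genuinely different route from the paper. The paper's first proof expands both sides as power series in $x$ via the $q$-binomial theorem and identifies the common value as the explicit bibasic series $\sum_{j\geqslant 0}x^{r+j}{m+j\brack m}_p{n+r+j\brack n}_q$; its second proof runs a divided-difference calculus on the alphabets $\{xp^i\}$ and $\{q^{-k}\}$. You instead expand both sides ``at the poles'': you observe that each side is a proper rational function of $x$ with denominator dividing $\prod_{i,j}(1-xp^iq^j)$, and match the coefficients of $1/(1-xp^iq^j)$ in the partial fraction decomposition. I checked your key evaluations: $\prod_{j'\neq j}(1-q^{j'-j})=(-1)^jq^{-{j+1\choose 2}}(q;q)_j(q;q)_{n-j}$ and its $p$-analogue are right, the common coefficient $(-1)^{i+j}p^{{i+1\choose 2}-ri}q^{{j+1\choose 2}}/\bigl[(p;p)_i(p;p)_{m-i}(q;q)_j(q;q)_{n-j}\bigr]$ is correct, and your degree count $r+n(m+1)<(m+1)(n+1)\iff r\leqslant m$ is exactly the properness condition. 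Two small points worth making explicit in a write-up: (i) you work with the coefficient $\lim_{x\to p^{-i}q^{-j}}(1-xp^iq^j)F(x)$ rather than the residue proper; these differ by the nonzero factor $-p^{-i}q^{-j}$, so the convention is harmless as long as it is applied to both sides, which you do; (ii) the argument as stated needs $p,q$ generic so that the $(m+1)(n+1)$ poles are distinct and simple, and one should add the one-line remark that the identity, being an equality of rational functions of $p,q,x$ valid on a dense set, holds identically. What your approach buys is a transparent explanation of the hypothesis $r\leqslant m$ (it is precisely properness at $x=\infty$) and manifest symmetry between the two sides; what the paper's series proof buys is the closed form for the common generating function, which it then reuses in later sections.
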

We shall give a proof of Theorem \ref{thm:01} along with its several consequences and variations  in Section~2.

Recently Liu~\cite[Proposition 4.1]{Liu} has obtained the following
formula,  the left-hand side of which specializes to the  Lambert series in \eqref{eq:u81} when $a=1$:
\begin{align}
\sum_{n=1}^\infty \frac{a q^n}{1-a q^n}
=\sum_{n=1}^\infty \frac{(1-aq^{2n})a^n q^{n^2}}{(1-q^n)(1-aq^n)}. \label{eq:liu}
\end{align}
Motivated by the identity \eqref{eq:liu}, we shall generalize the expansion of  the Lambert series \eqref{eq:divisor}
by using Carlitz's $q$-Eulerian polynomials $A_{n}(t;q)$.
Recall (see \cite{Foata}) that  these $q$-Eulerian polynomials $A_{n}(t;q)$ are defined by
\begin{align}
\sum_{j=0}^\infty t^{j} [j+1]_{q}^n
=\frac{A_n(t;q)}{(t;q)_{n+1}},\label{eq:eulerian}
\end{align}
where $[n]_{q}=(1-q^{n})/(1-q)$ for any positive integer $n$. A
well-known combinatorial interpretation for $A_{n}(t;q)$ is given by
$$
A_n(t;q)=\sum_{\sigma\in S_{n}}t^{\des\,\sigma}q^{\maj\,\sigma},
$$
where  $S_{n}$ is the set of permutations of $\{1,\ldots,n\}$ and
$$
\des\,\sigma=\sum_{{\sigma(i)>\sigma(i+1})}1,\quad \maj\,\sigma=\sum_{\sigma(i)>\sigma(i+1)}i.
$$
The first values of the polynomials $A_{n}(t;q)$ are the following:
$$
A_{0}(t;q)=A_{1}(t;q)=1;\; A_{2}(t;q)=1+tq;\; A_{3}(t;q)=1+2tq(q+1)+t^{2}q^{3}.
$$
We generalize Liu's result~\eqref{eq:liu} as follows.
\begin{thm}\label{thm:main}
For $m\geqslant 0$ and $|a|\leqslant 1$,  we have
\begin{align}
\sum_{n=1}^\infty \frac{a [n]_{p}^m q^n}{1-a q^n}
=\sum_{n=1}^\infty \frac{[n]_{p}^m(1-aq^{2n})a^n q^{n^2}}{(1-q^n)(1-aq^n)}
+\sum_{k=1}^m {m\choose k}\sum_{n=1}^\infty \frac{[n]_{p}^{m-k }a^np^{kn} q^{n^2+n}A_k(q^n;p)}{(q^n;p)_{k+1}}.\label{eq:main1}%\\[5pt]
%\sum_{n=1}^\infty {n\choose m}\frac{a q^n}{1-a q^n}
%=\sum_{n=1}^\infty {n\choose m}\frac{(1-aq^{2n})a^n q^{n^2}}{(1-q^n)(1-aq^n)}
%+\sum_{k=1}^m \sum_{n=1}^\infty {n\choose m-k}\frac{a^n q^{n(n+k)}}{(1-q^n)^{k+1}}. \label{eq:main2}
\end{align}
\end{thm}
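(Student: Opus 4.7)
My plan is to prove \eqref{eq:main1} by expanding both sides as absolutely convergent double series indexed by $(n,j)$ with $n,j\geqslant 1$ and matching them region by region. On the left-hand side, the geometric series expansion gives
\[
\sum_{n=1}^\infty \frac{a[n]_p^m q^n}{1-aq^n}=\sum_{n,j\geqslant 1}[n]_p^m a^j q^{nj}.
\]

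For the right-hand side I would first use the partial fraction decomposition
\[
\frac{1-aq^{2n}}{(1-q^n)(1-aq^n)}=\frac{1}{1-q^n}+\frac{aq^n}{1-aq^n}
\]
and expand each factor as a geometric series; after shifts of the summation index, the first summand of \eqref{eq:main1} becomes $\sum_{n\geqslant 1,\,j\geqslant n}[n]_p^m a^n q^{nj}+\sum_{n\geqslant 1,\,j>n}[n]_p^m a^j q^{nj}$. For the Eulerian term I would use the defining identity \eqref{eq:eulerian} with $t=q^n$, writing $A_k(q^n;p)/(q^n;p)_{k+1}=\sum_{i\geqslant 0}q^{in}[i+1]_p^k$, and then shift $j=i+1$ to bring the sum over $k$ into the form
\[
\sum_{k=1}^m\binom{m}{k}\sum_{n,j\geqslant 1}[n]_p^{m-k}[j]_p^k\,a^n p^{kn}q^{n(n+j)}.
\]

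The crucial ingredient is the $p$-additivity $[n+j]_p=[n]_p+p^n[j]_p$, which by the binomial theorem yields $\sum_{k=0}^m\binom{m}{k}[n]_p^{m-k}p^{kn}[j]_p^k=[n+j]_p^m$; hence the inner $k$-sum collapses to $[n+j]_p^m-[n]_p^m$. The substitution $N=n+j$ turns the $[n+j]_p^m$ piece into the strict lower-triangle sum $\sum_{n>j\geqslant 1}[n]_p^m a^j q^{nj}$, while the $-[n]_p^m$ piece, after the same substitution, cancels exactly the $j>n$ component of the split first summand above. What remains on the right is the diagonal $\sum_{n\geqslant 1}[n]_p^m a^n q^{n^2}$ together with the strict upper- and lower-triangle parts of $\sum_{n,j\geqslant 1}[n]_p^m a^j q^{nj}$, matching the LHS.

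The main obstacle is the combinatorial bookkeeping of the three index regions $\{n=j\}$, $\{n<j\}$, $\{n>j\}$ and verifying the precise cancellation described above; absolute convergence for $|q|<1$ and $|a|\leqslant 1$ justifies every reordering.
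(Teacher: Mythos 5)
Your proposal is correct and is essentially the paper's own proof read in reverse: the paper starts from the left-hand side, splits the double sum $\sum_{n,j}[n]_p^m a^jq^{nj}$ into the regions $j\geqslant n$ and $j<n$, and uses the same three ingredients --- the identity $\frac{a^nq^{n^2}}{1-aq^n}+\frac{a^nq^{n^2+n}}{1-q^n}=\frac{(1-aq^{2n})a^nq^{n^2}}{(1-q^n)(1-aq^n)}$ (your partial fraction step), the defining relation \eqref{eq:eulerian} at $t=q^n$, and the $p$-additivity $[n+j]_p=[n]_p+p^n[j]_p$ with the binomial theorem --- to assemble the right-hand side. The bookkeeping of the diagonal and the two strict triangles that you describe checks out, so no further work is needed.
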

When $p=1$ the polynomials $A_n(t):=A_{n}(t;1)$ are
the Eulerian polynomials (see \cite{Foata, Stanley}).
For example, we have
\begin{align*}
A_1(t)=1,\ A_2(t)=1+t,\ A_3(t)=1+4t+t^2,\ A_4(t)=1+11t+11t^2+t^3.
\end{align*}
Clearly, the above theorem reduces to  the following result for $p=1$.
\begin{cor}\label{thm:main2}
For $m\geqslant 0$ and $|a|\leqslant 1$, we have
\begin{align}
\sum_{n=1}^\infty \frac{a n^m q^n}{1-a q^n}
=\sum_{n=1}^\infty \frac{n^m(1-aq^{2n})a^n q^{n^2}}{(1-q^n)(1-aq^n)}
+\sum_{k=1}^m {m\choose k}\sum_{n=1}^\infty \frac{n^{m-k }a^n q^{n^2+n}A_k(q^n)}{(1-q^n)^{k+1}}.\label{eq:main2}%\\[5pt]
%\sum_{n=1}^\infty {n\choose m}\frac{a q^n}{1-a q^n}
%=\sum_{n=1}^\infty {n\choose m}\frac{(1-aq^{2n})a^n q^{n^2}}{(1-q^n)(1-aq^n)}
%+\sum_{k=1}^m \sum_{n=1}^\infty {n\choose m-k}\frac{a^n q^{n(n+k)}}{(1-q^n)^{k+1}}. \label{eq:main2}
\end{align}
\end{cor}
When $m=0$, the above theorem reduces to Liu's formula~\eqref{eq:liu}.
However, our proof for \eqref{eq:main1} is
quite different from Liu's proof for \eqref{eq:liu}.

Dividing  both sides of \eqref{eq:main2} by $a$ and setting $a=0$ we obtain
\begin{align*}
\sum_{n=1}^\infty  n^m q^n
=\frac{q}{1-q}
+\sum_{k=1}^m {m\choose k}\frac{q^{2}A_k(q)}{(1-q)^{k+1}}.%\label{eq2}
\end{align*}
This is equivalent to the $q=1$ case of \eqref{eq:eulerian} by using the symmetry of the Eulerian polynomials
$t^{n-1}A_n(t^{-1})=A_n(t)$ ($n\geqslant 1$) and the recurrence relation
\begin{align*}
A_0(t)=1,\quad A_n(t)=\sum_{k=0}^{n-1}{n\choose k} (t-1)^{n-k-1} A_k(t).
\end{align*}

In the next section we shall first give a simple proof of Theorem~\ref{thm:01}
using the $q$-series theory
%Note that another proof using
%the finite divided difference theory in \cite{CF}
and  then derive some consequences and variations.
We will prove Theorem~\ref{thm:main} in Section~3 and  give
a finite analogue of Uchimura's variation when $a=1$ along with
a connection between partitions and the number of divisors.
Finally we give new generalizations
of \eqref{eq:uch}--\eqref{eq:prodinger} in Section~4.
\section{Proof of Theorem~\ref{thm:01} and some consequences}
\subsection{Proof of Theorem~\ref{thm:01}}
Applying the $q$-binomial theorem \cite[(3.3.6)--(3.3.7)]{Andrews}:
\begin{align}
(x;q)_N &=\sum_{j=0}^N{N\brack j}_{q} (-1)^j  q^{{j\choose 2}}x^j, \label{eq:euler1}\\
\frac{1}{(x;q)_N} &=\sum_{j=0}^\infty{N+j-1\brack j}_q x^j, \label{eq:euler2}
\end{align}
we have
\begin{align}
%&\hskip -2mm
\sum_{k=0}^m\frac{(-1)^k p^{{k+1\choose 2}-rk}}{(p;p)_{k}(p;p)_{m-k}(p^{k}x;q)_{n+1}}% \nonumber\\
&=\sum_{k=0}^m \frac{ (-1)^k p^{{k+1\choose 2}-rk}}{(p;p)_{k}(p;p)_{m-k}}
\sum_{j=0}^\infty {n+j\brack j}_q (p^{k}x)^j \nonumber \\
&=\sum_{j=0}^\infty {n+j\brack j}_q \frac{x^j}{(p;p)_m}
\sum_{k=0}^m {m\brack k}_p (-1)^k p^{{k\choose 2}+(j-r+1)k}  \nonumber \\
&=\frac{1}{(p;p)_m}\sum_{j=0}^\infty {n+j\brack j}_q x^j (p^{j-r+1};p)_m \nonumber \\
&=\frac{1}{(p;p)_m}\sum_{j=0}^\infty {n+r+j\brack r+j}_q x^{r+j} (p^{j+1};p)_m  \nonumber \\
&=\sum_{j=0}^\infty x^{r+j}{m+j\brack m}_p {n+r+j\brack n}_q.  \label{eq:newpf}
\end{align}
Similarly, we have
\begin{align*}
\sum_{k=0}^n\frac{(-1)^k q^{{k+1\choose 2}+rk}x^r}{(q;q)_{k}(q;q)_{n-k}(q^{k}x;p)_{m+1}}
&=\frac{x^r}{(q;q)_n}\sum_{j=0}^\infty {m+j\brack j}_p x^j (q^{j+r+1};q)_n,
%&=\sum_{j=0}^\infty x^{r+j}{m+j\brack m}_p {n+r+j\brack n}_q,
\end{align*}
which is equal to the right-hand side of \eqref{eq:newpf}. This completes the proof.
\qed

%%%%%%%%%%%%%%%%%%%%%%%%%%%%%%%%%%%%%%%%%%%%%%%%%%%%%%%%%%%%%%%%%%%%%%%%%%%%%%%%%%%%%%%%%%%%%%%%%%%%%%%%%%%%%%%%%%%%%%%%%%%%%%%%%%%%%%%%%%%%%%%%%
%\section{Consequences and variations of Theorem \ref{thm:01}}
\subsection{An infinite version}
Letting $r=0$ in \eqref{eq:new},
we obtain the following symmetric bibasic transformation formula.
\begin{cor}For $m,n\geqslant 0$, there holds
\begin{align}
\sum_{k=0}^m\frac{(-1)^k p^{{k+1\choose 2}}}{(p;p)_{k}(p;p)_{m-k}(xp^{k};q)_{n+1}}
=\sum_{k=0}^n\frac{(-1)^k q^{{k+1\choose 2}}}{(q;q)_{k}(q;q)_{n-k}(xq^{k};p)_{m+1}}.
\label{eq:new2}
\end{align}
\end{cor}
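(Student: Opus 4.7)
The identity \eqref{eq:new2} is simply the $r=0$ instance of Theorem \ref{thm:01}, so the plan is to substitute $r=0$ directly into \eqref{eq:new} and read off the simplifications. The hypothesis $0\leqslant r\leqslant m$ in Theorem \ref{thm:01} is trivially satisfied for any $m\geqslant 0$, so the specialization is legitimate. Upon setting $r=0$, the factor $p^{-rk}$ in the summand on the left of \eqref{eq:new} collapses to $1$, the factor $q^{rk}$ in the summand on the right collapses to $1$, and the prefactor $x^{r}$ on the right becomes $x^{0}=1$. What remains on each side is exactly the corresponding side of \eqref{eq:new2}.

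No additional computation is required, because the substantive work has already been done in either of the two proofs of Theorem \ref{thm:01}: the $q$-binomial approach via \eqref{eq:euler1}--\eqref{eq:euler2} reduces both sides to the common double sum
\[
\sum_{j=0}^\infty x^{j}{m+j\brack m}_p{n+j\brack n}_q,
\]
while the divided-difference approach reduces it to the symmetry relation \eqref{basic3} applied to the constant polynomial $f\equiv 1$.

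There is no genuine obstacle in the argument; the only observation worth recording is a structural one. The resulting identity \eqref{eq:new2} is manifestly invariant under the swap $(m,p)\leftrightarrow(n,q)$, which justifies calling it a symmetric bibasic transformation formula. Theorem \ref{thm:01} breaks this symmetry by introducing the auxiliary parameter $r$, and the corollary isolates the unique symmetric specialization; this symmetry is what makes \eqref{eq:new2} convenient as a starting point for the further identities to be derived in the remainder of Section 3.
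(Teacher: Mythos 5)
Your proposal is correct and coincides with the paper's own derivation: the corollary is obtained exactly by setting $r=0$ in \eqref{eq:new}, with all the simplifications you describe, and the heavy lifting is already contained in the two proofs of Theorem \ref{thm:01}. Your observation that the $r=0$ case restores the $(m,p)\leftrightarrow(n,q)$ symmetry matches the paper's characterization of \eqref{eq:new2} as a symmetric bibasic transformation formula.
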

In fact, we have the following infinite form of \eqref{eq:new2}.

\begin{thm}\label{thm:02}
Let $|a|,|b|,|p|,|q|<1$. Then
\begin{align}
\frac{(a;p)_\infty}{(p;p)_\infty}\sum_{k=0}^\infty
\frac{(p/a;p)_k (xbp^k;q)_\infty}{(p;p)_k (xp^k;q)_{\infty}} a^k
=\frac{(b;q)_\infty}{(q;q)_\infty}\sum_{k=0}^\infty
\frac{(q/b;q)_k (xaq^k;p)_\infty}{(q;q)_k (xq^k;p)_{\infty}} b^k,  \label{eq:sym}
\end{align}
where $(a;q)_\infty=\lim_{n\to\infty}(a;q)_n$.
\end{thm}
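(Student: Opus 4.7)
The plan is to expand both sides of \eqref{eq:sym} via the $q$-binomial theorem in its two standard forms
\[
\sum_{k=0}^\infty\frac{(A;q)_k}{(q;q)_k}z^k=\frac{(Az;q)_\infty}{(z;q)_\infty},\qquad |z|<1,
\]
and show that each side reduces to the symmetric double series
\[
\sum_{j=0}^\infty\frac{(a;p)_j(b;q)_j}{(p;p)_j(q;q)_j}x^j,
\]
which is manifestly invariant under the involution $(a,p)\leftrightarrow(b,q)$.

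More concretely, I would first rewrite the factor $(xbp^k;q)_\infty/(xp^k;q)_\infty$ on the left-hand side of \eqref{eq:sym} by the $q$-binomial theorem with parameter $b$ and base $q$, producing
\[
\frac{(xbp^k;q)_\infty}{(xp^k;q)_\infty}=\sum_{j=0}^\infty\frac{(b;q)_j}{(q;q)_j}(xp^k)^j.
\]
Interchanging the order of summation over $k$ and $j$ (justified by absolute convergence when $|a|,|b|,|p|,|q|,|x|$ are small enough, and then extended by analytic continuation), the left-hand side of \eqref{eq:sym} becomes
\[
\frac{(a;p)_\infty}{(p;p)_\infty}\sum_{j=0}^\infty\frac{(b;q)_j}{(q;q)_j}x^j\sum_{k=0}^\infty\frac{(p/a;p)_k}{(p;p)_k}(ap^j)^k.
\]
The inner sum is again a $q$-binomial instance, this time with base $p$: taking $A=p/a$ and $z=ap^j$ gives $Az=p^{j+1}$, so the inner sum equals $(p^{j+1};p)_\infty/(ap^j;p)_\infty=(p;p)_\infty(a;p)_j/\bigl((p;p)_j(a;p)_\infty\bigr)$. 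The prefactor $(a;p)_\infty/(p;p)_\infty$ cancels cleanly and the left-hand side simplifies to the symmetric double series displayed above.

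Performing exactly the same two-step expansion on the right-hand side, with the roles of $(a,p)$ and $(b,q)$ swapped, produces the identical double series, which completes the identification. The main technical point—rather than a real obstacle—is the justification of swapping the two infinite sums and of the termwise application of the $q$-binomial theorem; this follows from the absolute convergence of the double series $\sum_{j,k}\frac{(a;p)_j(b;q)_j(p/a;p)_k}{(p;p)_j(q;q)_j(p;p)_k}(xp^j)^k$ in a suitable polydisk, after which the general case follows by analytic continuation in the parameters $a,b,x$ using the fact that both sides are analytic in these variables.
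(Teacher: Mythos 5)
Your proof is correct and follows essentially the same route as the paper: both expand the ratio $(xbp^k;q)_\infty/(xp^k;q)_\infty$ by the $q$-binomial theorem, interchange the order of summation, evaluate the inner sum as $(p^{j+1};p)_\infty/(ap^j;p)_\infty$, and reduce both sides to the symmetric double series $\sum_{j\geqslant 0}(a;p)_j(b;q)_j x^j/\bigl((p;p)_j(q;q)_j\bigr)$. The only difference is that you spell out the convergence and analytic-continuation justification, which the paper leaves implicit.
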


Note that \eqref{eq:sym} may be deemed a bibasic extension of the second iteration of the Heine transformation.
(The $p=q$ case can be seen to reduce to a special case of \cite[Appendix (III.2)]{GR}.)

\pf
Using the $q$-binomial theorem~\cite[Theorem 2.1]{Andrews}:
$$
\sum_{n=0}^\infty \frac{(a;q)_n}{(q;q)_n}x^n =\frac{(ax;q)_\infty}{(x;q)_\infty},
$$
we can write the left-hand side of \eqref{eq:sym} as
\begin{align}
%\frac{(a;p)_\infty}{(p;p)_\infty}\sum_{k=0}^\infty
%\frac{(p/a;p)_k (bp^k x;q)_\infty}{(p;p)_k (p^kx;q)_{\infty}} a^k
%&=
\frac{(a;p)_\infty}{(p;p)_\infty}\sum_{k=0}^\infty \frac{(p/a;p)_k}{(p;p)_k}a^k\sum_{j=0}^\infty \frac{(b;q)_j}{(q;q)_j}(p^kx)^j
&=\frac{(a;p)_\infty}{(p;p)_\infty}\sum_{j=0}^\infty \frac{(b;q)_j}{(q;q)_j}x^j \sum_{k=0}^\infty \frac{(p/a;p)_k}{(p;p)_k}(ap^j)^k \nonumber \\
&=\frac{(a;p)_\infty}{(p;p)_\infty}\sum_{j=0}^\infty \frac{(b;q)_j (p^{j+1};p)_\infty}{(q;q)_j (ap^j;p)_\infty}x^j\nonumber\\
&=\sum_{j=0}^\infty \frac{(b;q)_j (a;p)_j}{(q;q)_j (p;p)_j}x^j.\label{eq:symbis}
\end{align}
Therefore, by symmetry, both sides of \eqref{eq:sym} are equal to \eqref{eq:symbis}.  \qed

When $a=p^{m+1}$ and $b=q^{n+1}$, the identity \eqref{eq:sym}
reduces to \eqref{eq:new2}.
Letting $p=q^2$, $r=0$, $x=q^{2}$ and $m,n\to\infty$ in \eqref{eq:new}, we obtain
\begin{cor}There holds
\begin{align*}
&\hskip -3mm
\sum_{k=0}^\infty (-1)^k (q;q^2)_{k+1} q^{k(k+1)}
=\sum_{k=0}^\infty \frac{q^{(2k+1)k}} {(q;q^2)_k}
-\frac{(q^2;q^2)_\infty}{(q;q^2)_\infty}
\sum_{k=0}^\infty\frac{ q^{(2k+1)(k+1)}}{(q^2;q^2)_{k}}.
\end{align*}
\end{cor}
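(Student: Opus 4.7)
The plan is to specialize the bibasic identity \eqref{eq:new} with $p=q^2$, $r=0$ and the free parameter $x=q^2$ (this is the value that converts the exponent $p^{\binom{k+1}{2}}$ into $q^{k(k+1)}$, matching the summand on the left of the corollary), and then let $m,n\to\infty$ in turn.

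On the left-hand side of \eqref{eq:new}, $(xp^k;q)_{n+1}=(q^{2k+2};q)_{n+1}\to(q^{2k+2};q)_\infty$. Splitting the latter by the base $q^2$ gives
\[
(q^{2k+2};q)_\infty=(q^{2k+2};q^2)_\infty(q^{2k+3};q^2)_\infty=\frac{(q^2;q^2)_\infty(q;q^2)_\infty}{(q^2;q^2)_k(q;q^2)_{k+1}},
\]
so the factor $(q;q^2)_{k+1}$ appears in the numerator. Combined with $(q^2;q^2)_{m-k}\to(q^2;q^2)_\infty$, the left side reduces to
\[
\frac{1}{(q^2;q^2)_\infty^{\,2}(q;q^2)_\infty}\sum_{k=0}^\infty(-1)^k q^{k(k+1)}(q;q^2)_{k+1}.
\]

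On the right-hand side, $m\to\infty$ turns $(xq^k;p)_{m+1}$ into $(q^{k+2};q^2)_\infty$, which I would evaluate by separating the sum according to the parity of $k$. For $k=2j$: $\binom{2j+1}{2}=(2j+1)j$ and $(q^{2j+2};q^2)_\infty=(q^2;q^2)_\infty/(q^2;q^2)_j$; for $k=2j+1$: $\binom{2j+2}{2}=(2j+1)(j+1)$ and $(q^{2j+3};q^2)_\infty=(q;q^2)_\infty/(q;q^2)_{j+1}$. Using $(q;q)_{n-k}\to(q;q)_\infty$ together with $(q;q)_{2j}=(q;q^2)_j(q^2;q^2)_j$ and $(q;q)_{2j+1}=(q;q^2)_{j+1}(q^2;q^2)_j$, the even- and odd-$k$ subsums simplify respectively to
\[
\frac{1}{(q;q)_\infty(q^2;q^2)_\infty}\sum_{j=0}^\infty \frac{q^{(2j+1)j}}{(q;q^2)_j}\qquad\text{and}\qquad -\frac{1}{(q;q)_\infty(q;q^2)_\infty}\sum_{j=0}^\infty \frac{q^{(2j+1)(j+1)}}{(q^2;q^2)_j}.
\]

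To finish, I equate the two limits and multiply by $(q^2;q^2)_\infty^{\,2}(q;q^2)_\infty$; applying $(q;q)_\infty=(q;q^2)_\infty(q^2;q^2)_\infty$ makes the first prefactor $1$ and the second $(q^2;q^2)_\infty/(q;q^2)_\infty$, producing the corollary exactly. The main obstacle is the bookkeeping of the parity split on the right and the careful matching of the infinite-product prefactors on the two sides; no delicate convergence analysis is needed since $|q|<1$.
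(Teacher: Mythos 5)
Your proposal is correct and follows essentially the same route as the paper: the authors obtain this corollary precisely by specializing \eqref{eq:new} with $p=q^2$, $r=0$ and letting $m,n\to\infty$, leaving the parity split and product manipulations implicit. (Their stated parameter ``$z=q^{-1}$'' refers to the variable of the pre-substitution identity in the second proof of Theorem~\ref{thm:01}, which corresponds exactly to your choice $x=q^2$ in \eqref{eq:new}, so your bookkeeping matches theirs.)
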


Writing \eqref{eq:new} as
\begin{align}
&\hskip -2mm \sum_{k=1}^m\frac{(-1)^k p^{{k+1\choose 2}-rk}}{(p;p)_{k}(p;p)_{m-k}(p^{k}x;q)_{n+1}}
-\sum_{k=1}^n\frac{x^r (-1)^k q^{{k+1\choose 2}+rk}}{(q;q)_{k}(q;q)_{n-k}(q^{k}x;p)_{m+1}} \nonumber \\
&=\frac{x^r(p;p)_m (xq;q)_{n}-(q;q)_n(xp;p)_{m}}{(p;p)_m (q;q)_n (xp;p)_{m}(xq;q)_{n}(1-x)}, \label{eq:new3}
\end{align}
and letting $x\to 1$ in \eqref{eq:new3} and applying l'H\^opital's rule,
we are led to the following result:

\begin{cor}For $m,n\geqslant 0$ and $0\leqslant r\leqslant m$, there holds
\begin{align}
&\hskip -3mm  \sum_{k=1}^{m}\frac{(-1)^k p^{{k+1\choose 2}-rk}}{(p;p)_k (p;p)_{m-k}(p^k;q)_{n+1}}
-\sum_{k=1}^{n}\frac{(-1)^k q^{{k+1\choose 2}+rk}}{(q;q)_k (q;q)_{n-k}(q^k;p)_{m+1}}\nonumber\\
&=\frac{1}{(p;p)_{m}(q;q)_{n}}
\left(-r+\sum_{k=1}^n\frac{q^k}{1-q^k}-\sum_{k=1}^m\frac{p^k}{1-p^k}\right). \label{cor:newnew}
\end{align}
\end{cor}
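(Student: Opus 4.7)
The plan is to start from equation~\eqref{eq:new3}, which follows from Theorem~\ref{thm:01} by isolating the $k=0$ terms of both sides of~\eqref{eq:new} and rearranging, and then to take the limit $x\to 1$ on both sides. First I would check that the left-hand side of~\eqref{eq:new3} is continuous at $x=1$: for each $k\geqslant 1$, the quantities $(xp^{k};q)_{n+1}$ and $(xq^{k};p)_{m+1}$ specialize to $(p^{k};q)_{n+1}$ and $(q^{k};p)_{m+1}$, neither of which vanishes since the standing hypothesis $|q|<1$ (together with $|p|<1$) forces $p^{k}q^{j}\ne 1$ and $q^{k}p^{j}\ne 1$. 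Hence the limit of the left-hand side of~\eqref{eq:new3} is exactly the left-hand side of~\eqref{cor:newnew}.

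All the remaining work is to evaluate
\[
L:=\lim_{x\to 1}\frac{x^r(p;p)_m (xq;q)_{n}-(q;q)_n(xp;p)_{m}}{(p;p)_m (q;q)_n (xp;p)_{m}(xq;q)_{n}(1-x)}.
\]
Write $N(x)=x^r(p;p)_m(xq;q)_n-(q;q)_n(xp;p)_m$ and $D(x)=(p;p)_m(q;q)_n(xp;p)_m(xq;q)_n$. Then $D(1)=(p;p)_m^{2}(q;q)_n^{2}\ne 0$ while $N(1)=0$, so l'H\^opital's rule gives $L=-N'(1)/D(1)$. The key computation is $N'(1)$, which I would carry out by logarithmic differentiation of the finite products: from $(xq;q)_n=\prod_{k=1}^{n}(1-xq^{k})$ one obtains
\[
\frac{d}{dx}(xq;q)_n\Big|_{x=1}=-(q;q)_n\sum_{k=1}^{n}\frac{q^{k}}{1-q^{k}},
\]
and symmetrically for $(xp;p)_m$. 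Combined with $(x^{r})'|_{x=1}=r$, the product rule gives
\[
N'(1)=(p;p)_m(q;q)_n\Bigl(r-\sum_{k=1}^{n}\frac{q^{k}}{1-q^{k}}+\sum_{k=1}^{m}\frac{p^{k}}{1-p^{k}}\Bigr),
\]
and substituting this into $L=-N'(1)/[(p;p)_m^{2}(q;q)_n^{2}]$ reproduces precisely the right-hand side of~\eqref{cor:newnew}.

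There is no serious obstacle: the paper itself dictates the strategy (\emph{letting $x\to 1$ and applying l'H\^opital's rule}), and the entire argument reduces to two logarithmic derivatives of Pochhammer products and a routine check that the denominator does not vanish at $x=1$. The hypothesis $0\leqslant r\leqslant m$ is inherited from Theorem~\ref{thm:01}; the limit computation itself would be valid for any nonnegative $r$.
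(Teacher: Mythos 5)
Your proposal is correct and follows exactly the paper's own route: rewrite \eqref{eq:new} as \eqref{eq:new3} by isolating the $k=0$ terms, then let $x\to 1$ and apply l'H\^opital's rule. The paper leaves the limit computation implicit, and your logarithmic-differentiation evaluation of $N'(1)$ supplies precisely the missing details, arriving at the stated right-hand side.
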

Note that, by the symmetry of $m$ and $n$, the identity \eqref{cor:newnew} also holds for $-n\leqslant r\leqslant 0$.
In particular, when $m=n$ and $p=q$, we have
\begin{cor} For $n\geqslant 0$ and $|r|\leqslant n$, there holds
\begin{align*}
\sum_{k=1}^{n}\frac{(-1)^{k-1} q^{{k+1\choose 2}-rk}(1-q^{2rk})}{(q;q)_k (q;q)_{n-k}(q^k;q)_{n+1}}
=\frac{r}{(q;q)_n^2}.
\end{align*}
\end{cor}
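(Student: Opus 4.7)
The plan is to derive the identity as a direct specialization of the preceding result \eqref{cor:newnew}. First I would set $m=n$ and $p=q$ throughout that formula. On the right-hand side, the two harmonic-type sums $\sum_{k=1}^{n}\frac{q^k}{1-q^k}$ and $-\sum_{k=1}^{n}\frac{p^k}{1-p^k}$ coincide and cancel, so the right-hand side collapses to $-r/(q;q)_n^{2}$.

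Next I would combine the two sums on the left-hand side. After the specialization, both sums range over $k=1,\ldots,n$ and share the common prefactor $(-1)^{k}q^{\binom{k+1}{2}}/[(q;q)_k (q;q)_{n-k}(q^k;q)_{n+1}]$; the only difference between the two summands is the sign of the exponent $rk$. Combining them introduces the factor $q^{-rk}-q^{rk}=q^{-rk}(1-q^{2rk})$, which is precisely the shape appearing in the claimed identity. Multiplying through by $-1$ converts $(-1)^{k}$ to $(-1)^{k-1}$ on the left and $-r$ to $r$ on the right, producing the stated formula.

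Finally, for the range of $r$: Corollary \eqref{cor:newnew} is stated for $0\leqslant r\leqslant m$, but as the remark immediately preceding the statement points out, the symmetry $(m,n)\leftrightarrow (n,m)$ (evident from swapping the two sums on the left, which sends $r$ to $-r$) extends it to $-n\leqslant r\leqslant 0$. Setting $m=n$ then covers the full range $|r|\leqslant n$ asserted here. Since the argument is a mechanical specialization of an already-established identity, I do not anticipate any substantive obstacle; the only point requiring a small amount of care is the bookkeeping that turns $q^{-rk}-q^{rk}$ into the factored form $q^{-rk}(1-q^{2rk})$ so that the summand matches the one in the statement exactly.
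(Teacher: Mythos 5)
Your proposal is correct and is exactly the paper's route: the corollary is obtained by setting $m=n$ and $p=q$ in \eqref{cor:newnew}, combining the two sums via $q^{-rk}-q^{rk}=q^{-rk}(1-q^{2rk})$, and invoking the remark that the symmetry in $(m,n)$ extends the valid range of $r$ to $|r|\leqslant n$. No gaps.
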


Taking $r=0$ in \eqref{cor:newnew}, we obtain the following bibasic generalization of \cite[Corollary 6.4]{GZh}.

\begin{cor}For $m,n\geqslant 0$, there holds
\begin{align}
&\hskip -3mm  \sum_{k=1}^{m}\frac{(-1)^k p^{{k+1\choose 2}}}{(p;p)_k (p;p)_{m-k}(p^k;q)_{n+1}}
-\sum_{k=1}^{n}\frac{(-1)^k q^{{k+1\choose 2}}}{(q;q)_k (q;q)_{n-k}(q^k;p)_{m+1}}\nonumber\\
&=\frac{1}{(p;p)_{m}(q;q)_{n}}
\left(\sum_{k=1}^n\frac{q^k}{1-q^k}-\sum_{k=1}^m\frac{p^k}{1-p^k}\right). \label{cor:new}
\end{align}
\end{cor}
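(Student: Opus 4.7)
The plan is to derive \eqref{cor:new} as the immediate $r=0$ specialization of the previously established Corollary \eqref{cor:newnew}. Setting $r=0$ in \eqref{cor:newnew}, the standalone term $-r$ inside the parentheses on the right-hand side simply drops out, and both summations on the left retain their shape (the factor $p^{-rk}$ becomes $1$ and $q^{rk}$ becomes $1$). What remains is exactly the statement of \eqref{cor:new}. Since \eqref{cor:newnew} has already been proved, this specialization is essentially a one-line argument.

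For the sake of self-containment I would also note how the full argument runs from scratch, specialized to $r=0$: start from the $r=0$ case of Theorem \ref{thm:01} (i.e., from \eqref{eq:new2}), peel off the $k=0$ terms on each side to isolate the singular behavior near $x=1$, and arrive at the $r=0$ version of \eqref{eq:new3}, whose right-hand side is
$$\frac{(p;p)_m(xq;q)_n-(q;q)_n(xp;p)_m}{(p;p)_m(q;q)_n(xp;p)_m(xq;q)_n(1-x)}.$$
The numerator vanishes at $x=1$ because $(xq;q)_n|_{x=1}=(q;q)_n$ and $(xp;p)_m|_{x=1}=(p;p)_m$, so the quotient is a $0/0$ indeterminate form.

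Applying l'H\^opital's rule, logarithmic differentiation gives
$$\frac{d}{dx}(xq;q)_n\bigg|_{x=1}=-(q;q)_n\sum_{k=1}^n\frac{q^k}{1-q^k},\qquad \frac{d}{dx}(xp;p)_m\bigg|_{x=1}=-(p;p)_m\sum_{k=1}^m\frac{p^k}{1-p^k},$$
and dividing by $\tfrac{d}{dx}(1-x)=-1$ collapses everything to $\frac{1}{(p;p)_m(q;q)_n}\bigl(\sum_{k=1}^n q^k/(1-q^k)-\sum_{k=1}^m p^k/(1-p^k)\bigr)$. This matches the right-hand side of \eqref{cor:new}. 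The only delicate point is sign bookkeeping in the logarithmic derivative, but that is entirely routine, and in any case the paper's general-$r$ version \eqref{cor:newnew} can simply be quoted. There is no substantive obstacle.
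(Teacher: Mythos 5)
Your proposal is correct and matches the paper exactly: the paper obtains \eqref{cor:new} precisely by taking $r=0$ in \eqref{cor:newnew}, which in turn was derived from \eqref{eq:new3} by letting $x\to 1$ and applying l'H\^opital's rule, just as in your self-contained sketch. The sign bookkeeping in your logarithmic derivatives is also right, so there is nothing to add.
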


Letting $p=q^2$ and $m=n$ in \eqref{cor:new}, we have
\begin{cor}For $n\geqslant 0$, there holds
\begin{align}
&\hskip -3mm
\sum_{k=1}^{n}\frac{(-1)^k q^{k(k+1)}}{(q^2;q^2)_k (q^2;q^2)_{n-k}(q^{2k};q)_{n+1}}
-\sum_{k=1}^{n}\frac{(-1)^k q^{{k+1\choose 2}}}{(q;q)_k (q;q)_{n-k}(q^k;q^2)_{n+1}} \nonumber \\
&=\frac{1}{(q^2;q^2)_{n}(q;q)_{n}} \sum_{k=1}^n\frac{q^k}{1-q^{2k}}.  \label{eq:long}
\end{align}
\end{cor}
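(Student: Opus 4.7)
The plan is to obtain this identity as a direct specialization of the bibasic corollary \eqref{cor:new}, which has already been established. I would set $p=q^2$ and $m=n$ in \eqref{cor:new} and then verify that each piece matches, with the only nontrivial step being a small manipulation on the right-hand side that turns the difference of Lambert-type sums into a single clean sum.

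First, I would track the substitutions on the left-hand side: under $p=q^2$, the exponent $\binom{k+1}{2}$ on $p$ becomes $k(k+1)$, the symbol $(p;p)_k$ becomes $(q^2;q^2)_k$, and $(p^k;q)_{n+1}$ becomes $(q^{2k};q)_{n+1}$, which accounts exactly for the first sum in \eqref{eq:long}. The second sum on the left of \eqref{cor:new} is already in base $q$, and the substitution $(q^k;p)_{m+1}=(q^k;q^2)_{n+1}$ together with $m=n$ turns it verbatim into the second sum of \eqref{eq:long}, with a $\binom{k+1}{2}$ exponent on $q$. So the left-hand side matches automatically.

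For the right-hand side, the prefactor $1/((p;p)_m(q;q)_n)$ becomes $1/((q^2;q^2)_n(q;q)_n)$ as desired. The only remaining task is to show
\[
\sum_{k=1}^n\frac{q^k}{1-q^k}-\sum_{k=1}^n\frac{q^{2k}}{1-q^{2k}}=\sum_{k=1}^n\frac{q^k}{1-q^{2k}}.
\]
I would do this termwise: write $\frac{q^k}{1-q^k}=\frac{q^k(1+q^k)}{1-q^{2k}}=\frac{q^k+q^{2k}}{1-q^{2k}}$, so that subtracting $\frac{q^{2k}}{1-q^{2k}}$ leaves exactly $\frac{q^k}{1-q^{2k}}$. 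Summing over $k$ from $1$ to $n$ gives the right-hand side of \eqref{eq:long}.

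There is really no serious obstacle here; the whole argument is a substitution plus one elementary partial-fraction identity on each term of the Lambert-type sum. If anything, the one place to be careful is to confirm that the specialization $p=q^2$ is admissible in \eqref{cor:new} (it is, since the corollary was derived under $|p|,|q|<1$ and $|q^2|<|q|<1$ causes no singularities in the $q$-shifted factorials appearing on either side for $n\geq 0$). Once this is noted, assembling the three bookkeeping steps above yields \eqref{eq:long} immediately.
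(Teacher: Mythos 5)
Your proposal is correct and follows exactly the paper's own route: the paper obtains \eqref{eq:long} precisely by setting $p=q^2$ and $m=n$ in \eqref{cor:new}, and the termwise identity $\frac{q^k}{1-q^k}-\frac{q^{2k}}{1-q^{2k}}=\frac{q^k}{1-q^{2k}}$ is the only computation needed. Nothing is missing; you have simply spelled out the bookkeeping that the paper leaves implicit.
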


Letting $n$ tend to $\infty$ in \eqref{eq:long}, we obtain
\begin{align*}
&\hskip -3mm
\sum_{k=1}^\infty\frac{(-1)^k (q;q^2)_k q^{k(k+1)}}{1-q^{2k}}
-\sum_{k=1}^\infty\frac{ q^{(2k+1)k}}{(q;q^2)_k (1-q^{2k})}
+\frac{(q^2;q^2)_\infty}{(q;q^2)_\infty}
\sum_{k=0}^\infty\frac{ q^{(2k+1)(k+1)}}{(q^2;q^2)_{k} (1-q^{2k+1})}\\
&=\sum_{k=1}^\infty \frac{q^k}{1-q^{2k}}.
\end{align*}
Note that (see \cite[(3.1)]{FL2}) the coefficient of $q^n$ in
\begin{align*}
\sum_{k=1}^\infty \frac{q^k}{1-q^{2k}}=\sum_{k=1}^\infty \frac{q^{2k-1}}{1-q^{2k-1}}
=q+q^2+2q^3+q^4+2q^5+2q^6+2q^7+q^8+3q^9+\cdots
\end{align*}
counts the number of (positive) odd divisors of $n$.

%%%%%%%%%%%%%%%%%%%%%%%%%%%

\section{Proof of Theorem \ref{thm:main} and some variations}

\subsection{Proof of Theorem \ref{thm:main}}
Assume that $|aq|<1$.  As  %$\frac{aq^n}{1-a q^n}=\sum_{k\geqslant 1} a^kq^{nk}$ and
 $[n+j]_{p}=[n]_{p}+p^n[j]_{p}$,
we have
\begin{align}
\sum_{n=1}^\infty \frac{a [n]_{p}^m q^n}{1-a q^n}
&=\sum_{n\geqslant 1}[n]_{p}^m \sum_{k\geqslant 1}a^{k} q^{nk}\nonumber\\
&=\sum_{k\geqslant  n\geqslant 1}[n]_{p}^m a^{k} q^{nk}+\sum_{n>k\geqslant 1}[n]_{p}^m a^{k} q^{nk}
\nonumber\\
&=\sum_{n=1}^\infty\sum_{k=0}^\infty [n]_{p}^m a^{n+k}q^{n(n+k)}
+\sum_{n=1}^\infty\sum_{j=1}^\infty [n+j]_{p}^m a^{n}q^{n(n+j)} \nonumber\\
&=\sum_{n=1}^\infty [n]_{p}^m  \sum_{k=0}^\infty a^{n+k}q^{n(n+k)}
+\sum_{n=1}^\infty [n]_{p}^m  \sum_{j=1}^\infty  a^{n}q^{n(n+j)} \nonumber \\
&\ \quad{}+\sum_{k=1}^m {m\choose k}[n]_{p}^{m-k}
\sum_{n=1}^\infty a^n\,p^{nk}\sum_{j=1}^\infty [j]_{p}^k  q^{n(n+j)}.  \label{eq:simple}
\end{align}
Note that
\begin{align}
\sum_{k=0}^\infty  a^{n+k}q^{n(n+k)}
+\sum_{j=1}^\infty a^{n}q^{n(n+j)}
&=\frac{a^n q^{n^2}}{1-aq^n}+\frac{a^n q^{n^2+n}}{1-q^n}\nonumber\\
&=\frac{(1-aq^{2n})a^n q^{n^2}}{(1-q^n)(1-aq^n)}, \label{eq:simple2}
\end{align}
and, by \eqref{eq:eulerian},
\begin{align}
\sum_{j=1}^\infty [j]_{p}^k q^{n(n+j)}
=q^{n^2+n}\sum_{j=1}^\infty [j]_{p}^k q^{n(j-1)}
=\frac{q^{n^2+n} A_k(q^n;p)}{(q^n;p)_{k+1}}. \label{eq:eulerian2}
\end{align}
Combining \eqref{eq:simple}, \eqref{eq:simple2} and \eqref{eq:eulerian2},
we complete the proof of \eqref{eq:main1}.

\begin{prop} For $m\geqslant 0$ and $|a|\leqslant 1$,  we have
\begin{align}
\sum_{n=1}^\infty {n\choose m}\frac{a q^n}{1-a q^n}
=\sum_{n=1}^\infty {n\choose m}\frac{(1-aq^{2n})a^n q^{n^2}}{(1-q^n)(1-aq^n)}
+\sum_{k=1}^m \sum_{n=1}^\infty {n\choose m-k}\frac{a^n q^{n(n+k)}}{(1-q^n)^{k+1}}. \label{eq:main3}
\end{align}
\end{prop}
\begin{proof}
Similarly to the proof of Theorem~\ref{thm:main}, applying the Chu-Vandermonde convolution formula
$$
{n+j\choose m}=\sum_{k=0}^m {n\choose m-k}{j\choose k},
$$
and the binomial theorem
\begin{align*}
\sum_{j=k}^\infty {j\choose k} q^j
=\frac{q^k}{(1-q)^{k+1}},
\end{align*}
we can prove \eqref{eq:main3}.
\end{proof}

\subsection{A refinement of Uchimura's variation}
In fact, Liu's formula \eqref{eq:liu} is a special
case of Agarwal's identity, see \cite{Agarwal, Arndt}):
\begin{align*}
\sum_{n=0}^\infty \frac{t^n}{1-x q^n}
=\sum_{n=0}^\infty \frac{(1-xtq^{2n})x^n t^n q^{n^2}}{(1-xq^n)(1-tq^n)}.
\end{align*}
Another variation of \eqref{eq:liu} when $a=1$ is
Uchimura's identity~\cite{Uchimura84}:
\begin{align}
\sum_{k\geqslant 1}\frac{q^k}{1-q^k}=\sum_{m=1}^{+\infty} mq^m \prod_{j=m+1}^{+\infty}(1-q^j).  \label{eq:vh}
\end{align}
For $n\in \N$ let $d(n)$ be the number of positive divisors of $n$. A partition of $n$ is a non decreasing sequence
of positive integers $\pi=(n_1, n_2, \ldots, n_l)$ such
that $n_1\geqslant n_2\geqslant \cdots \geqslant n_l>0$ and $n=n_1+\cdots +n_l$.
The number of parts $l$ is called the length of $\pi$. The largest part $n_1$ is denoted by $g(\pi)$
and the smallest part $n_l$ is demoted by $s(\pi)$. A partition into distinct parts is called odd
(resp. even) if its length is odd (resp. even).
Let $t(n)$  be the sum of smallest parts of odd partitions of $n$ minus the sum of smallest parts of even partitions of $n$.
Bressoud and Subbarao~\cite{BS84}  noticed that Uchimura's identity is equivalent to
\begin{align}
d(n)=t(n),\label{eq:bs}
\end{align}
and gave a combinatorial  proof of \eqref{eq:vh}. It is interesting to note that \eqref{eq:bs} was rediscovered by
Wang et al. \cite{WFF95}. We can give a further refinement of \eqref{eq:bs}.

Let $d(n, N)$ be the number of divisors $\leqslant N$ of $n$.
Let ${\mathcal P}(n,N)$ be the set of partitions $\pi$ of $n$ into distinct parts
such that $g(\pi)-s(\pi)\leqslant N-1$. Let
$t(n, N)$ be the sum of smallest parts of odd partitions of ${\mathcal P}(n,N)$
minus the sum of smallest parts of even partitions of ${\mathcal P}(n,N)$. We have the following refinement of
\eqref{eq:bs}.

\begin{thm}
For all natural numbers $n$ and $N$, there holds
\begin{align}\label{eq:gvh}
d(n, N)=t(n, N)-t(n-N, N).
\end{align}
\end{thm}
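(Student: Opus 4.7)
\medskip

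\noindent\textbf{Proof proposal.}
The plan is to translate \eqref{eq:gvh} into a generating function identity and then recognise the result as Van Hamme's identity \eqref{eq:hamme}. First I would compute the generating function
\[
D_N(q):=\sum_{n\geqslant 1} d(n,N)\,q^n=\sum_{k=1}^N \frac{q^k}{1-q^k},
\]
which is immediate from \eqref{eq:divisor}. Next, I would compute $T_N(q):=\sum_n t(n,N)q^n$. A partition $\pi\in{\mathcal P}(n,N)$ with smallest part $m$ consists of $m$ together with a (possibly empty) subset $\{m+a_1<\cdots<m+a_k\}$ of $\{m+1,\ldots,m+N-1\}$, so its length is $k+1$ and its sign is $(-1)^k$. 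Hence
\[
T_N(q)=\sum_{m\geqslant 1} m\, q^m\prod_{j=m+1}^{m+N-1}(1-q^j)
=\sum_{m\geqslant 1}m\,q^m\sum_{k=0}^{N-1}(-1)^k q^{km}\sum_{1\leqslant a_1<\cdots<a_k\leqslant N-1}q^{a_1+\cdots+a_k}.
\]
The inner sum equals $q^{k(k+1)/2}{N-1\brack k}_q$ by the $q$-binomial theorem \eqref{eq:euler1}, and $\sum_{m\geqslant 1}m\,q^{(k+1)m}=q^{k+1}/(1-q^{k+1})^2$.

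The target identity \eqref{eq:gvh} is equivalent to $(1-q^N)T_N(q)=D_N(q)$, so I would multiply $T_N(q)$ by $1-q^N$ and use the key simplification
\[
\frac{1-q^N}{1-q^{k+1}}{N-1\brack k}_q={N\brack k+1}_q,
\]
which collapses one of the two denominator factors. After the substitution $j=k+1$, the expression becomes
\[
(1-q^N)T_N(q)=\sum_{j=1}^N(-1)^{j-1}{N\brack j}_q\frac{q^{j(j+1)/2}}{1-q^j},
\]
and this is exactly the left-hand side of Van Hamme's identity \eqref{eq:hamme}, whose right-hand side is $\sum_{j=1}^N q^j/(1-q^j)=D_N(q)$. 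Comparing coefficients of $q^n$ in $(1-q^N)T_N(q)=D_N(q)$ gives \eqref{eq:gvh}.

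The routine checks are the expansion of $\prod_{j=m+1}^{m+N-1}(1-q^j)$ and the telescoping of the $q$-binomials. The only slightly delicate step is matching the exponent bookkeeping: one must verify that $k(k+1)/2+(k+1)$ becomes $j(j+1)/2$ under $j=k+1$, and that $m\geqslant 1$ rather than $m\geqslant 0$ is the correct range (the $m=0$ contribution vanishes because of the factor $m$). After that, the identity is simply a repackaging of Van Hamme's identity, so I do not anticipate any substantive obstacle.
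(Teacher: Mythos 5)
Your proposal is correct, and it takes a genuinely different route from the paper's proof. Both arguments reduce the theorem to the generating-function identity $D_N(q)=(1-q^N)T_N(q)$ with $T_N(q)=\sum_{m\geqslant 1}mq^m(q^{m+1};q)_{N-1}$, but they establish that identity in different ways. The paper differentiates Euler's expansion $1/(zq;q)_N=\sum_{m\geqslant 0}(q^{m+1};q)_{N-1}z^mq^m/(q;q)_{N-1}$ with respect to $z$, multiplies back by $(zq;q)_N$ and sets $z=1$, so that no previously known divisor-sum identity is invoked. You instead expand $(q^{m+1};q)_{N-1}$ via the $q$-binomial theorem \eqref{eq:euler1}, sum the resulting series over $m$, and absorb the factor $1-q^N$ through $\frac{1-q^N}{1-q^{k+1}}{N-1\brack k}_q={N\brack k+1}_q$, landing exactly on the left-hand side of Van Hamme's identity \eqref{eq:hamme} with $n=N$. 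All the individual steps check out: the subset generating function $q^{k(k+1)/2}{N-1\brack k}_q$, the evaluation $\sum_{m\geqslant 1}mq^{(k+1)m}=q^{k+1}/(1-q^{k+1})^2$, and the exponent shift ${k\choose 2}+k(m+1)+m={k+1\choose 2}+(k+1)m$ reorganized so that $k(k+1)/2+(k+1)=j(j+1)/2$ under $j=k+1$. What your route buys is conceptual clarity: it exhibits the theorem as precisely the partition-theoretic restatement of Van Hamme's finite identity, in the same way that \eqref{eq:bs} restates Uchimura's \eqref{eq:u81}. What the paper's route buys is self-containedness; indeed, read backwards through your computation, the paper's derivation furnishes an independent proof of \eqref{eq:hamme}.
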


For example, when $n=9$ and $N=3$, we have
$$
{\mathcal P}(9,3)=\{(9), (5,4), (4,3,2)\};\quad   {\mathcal P}(6,3)=\{(6), (4,2), (3,2,1)\},
$$
and so $t(9,3)-t(6,3)=(9+2-4)-(6+1-2)=2$, which is the number of divisors of $9$ less than or equal to 3.

\begin{proof}
Rewrite \eqref{eq:euler2}  as follows:
$$
\frac{1}{(zq;q)_N}=\sum_{m=0}^\infty \frac{(q^{m+1};q)_{N-1}}{(q;q)_{N-1}}z^mq^m.
$$
Differentiating the above identity with respect to $z$, we obtain
$$
\frac{1}{(zq;q)_N}\sum_{k=1}^N \frac{q^k}{1-zq^k}=\sum_{m=0}^\infty \frac{m(q^{m+1};q)_{N-1}}{(q;q)_{N-1}}z^{m-1}q^m.
$$
Multiplying  the two sides by $(zq;q)_N$ and setting  $z=1$, we get
\begin{align}
\sum_{k= 1}^N\frac{q^k}{1-q^k}=\sum_{m=1}^\infty mq^m(q^{m+1};q)_{N-1}
-\sum_{m=1}^\infty mq^{m+N}(q^{m+1};q)_{N-1}. \label{eq:qNN}
\end{align}
The proof then follows from extracting the coefficients of $q^n$ in both sides of \eqref{eq:qNN}.
\end{proof}

\begin{remark}
It is possible to give a combinatorial proof of  \eqref{eq:gvh} by generalizing
Bressoud and Subarao's combinatorial proof of \eqref{eq:bs}.
\end{remark}
%%%%%%%%%
\section{Generalizations of \eqref{eq:uch}, \eqref{eq:dilch} and \eqref{eq:prodinger}}
\subsection{Two generalizations of Uchimura's identity}
In this section, we give two generalizations of Uchimura's identity \eqref{eq:uch}.
\begin{thm}\label{thm:uch1}
For $m,n\geqslant 0$ and $0\leqslant r\leqslant m$, there holds
\begin{align}
&\hskip -3mm  \sum_{k=1}^{m}\frac{(-1)^k q^{{k+1\choose 2}-rk}}{(q;q)_k (q;q)_{m-k}(xq^{k};q)_{n+1}}
-x^r\sum_{k=1}^{n}\frac{(-1)^k q^{{k+1\choose 2}+rk}}
{(q;q)_k (q;q)_{n-k}(xq^{k};q)_{m+1}}
\nonumber\\
&=\frac{1}{(q;q)_{m}(q;q)_{n}}
\left(-\frac{1-x^r}{1-x}+\sum_{k=1}^n \frac{q^k (q;q)_{k-1}}{(xq;q)_k}
-x^r \sum_{k=1}^m \frac{q^k (q;q)_{k-1}}{(xq;q)_k}\right). \label{uch:001}
\end{align}
\end{thm}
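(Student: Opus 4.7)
The plan is to deduce \eqref{uch:001} directly from Theorem~\ref{thm:01} by specialising the two bases to a common one. Setting $p=q$ in \eqref{eq:new} gives
$$\sum_{k=0}^{m}\frac{(-1)^k q^{{k+1\choose 2}-rk}}{(q;q)_{k}(q;q)_{m-k}(xq^{k};q)_{n+1}}=x^r\sum_{k=0}^{n}\frac{(-1)^k q^{{k+1\choose 2}+rk}}{(q;q)_{k}(q;q)_{n-k}(xq^{k};q)_{m+1}}.$$
First I would isolate the $k=0$ contributions on both sides. What remains are exactly the two truncated sums appearing on the left-hand side of \eqref{uch:001}, while the two boundary terms rearrange to
$$\frac{x^r}{(q;q)_n(x;q)_{m+1}}-\frac{1}{(q;q)_m(x;q)_{n+1}}.$$

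Next I would recognise this compact expression as the claimed right-hand side of \eqref{uch:001}. For this, the key auxiliary identity is
$$\frac{(q;q)_N}{(x;q)_{N+1}}=\frac{1}{1-x}-\sum_{k=1}^{N}\frac{q^k(q;q)_{k-1}}{(xq;q)_k},\qquad N\geqslant 0,$$
which I would prove by a one-line telescoping argument: since $(1-q^k)-(1-xq^k)=-(1-x)q^k$, one has
$$\frac{(q;q)_k}{(xq;q)_k}-\frac{(q;q)_{k-1}}{(xq;q)_{k-1}}=-\frac{(1-x)q^k(q;q)_{k-1}}{(xq;q)_k},$$
and summing from $k=1$ to $N$ then dividing by $1-x$ (using $(x;q)_{N+1}=(1-x)(xq;q)_N$) delivers the auxiliary identity.

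The final step is assembly: apply the auxiliary identity once with $N=m$ (to expand $1/(x;q)_{m+1}$), once with $N=n$ (to expand $1/(x;q)_{n+1}$), and subtract. The two $1/(1-x)$ contributions collapse into $-\frac{1-x^r}{1-x}$, and the two remaining sums produce the sums on the right-hand side of \eqref{uch:001}, weighted by $-x^r$ and $+1$ respectively; pulling out the common factor $1/((q;q)_m(q;q)_n)$ then finishes the proof. The main obstacle is purely organisational---keeping track of which sum carries the factor $x^r$ and of the sign when merging the two $1/(1-x)$ pieces---but once the auxiliary telescoping identity is in hand this is a short bookkeeping exercise with no further difficulty.
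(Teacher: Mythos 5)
Your proposal is correct and follows essentially the same route as the paper: specialise $p=q$ in Theorem~\ref{thm:01}, peel off the $k=0$ terms to get the compact difference $\frac{x^r}{(q;q)_n(x;q)_{m+1}}-\frac{1}{(q;q)_m(x;q)_{n+1}}$, and then rewrite it via the telescoping identity $\sum_{k=1}^{N}\frac{q^k(q;q)_{k-1}}{(xq;q)_k}=\frac{1}{1-x}\bigl(1-\frac{(q;q)_N}{(xq;q)_N}\bigr)$, which is exactly the auxiliary identity you state (in product-over-$(x;q)_{N+1}$ form). The bookkeeping in your assembly step checks out against \eqref{eq:pf12}.
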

\pf When $p=q$, the identity \eqref{eq:new} may be rewritten as
\begin{align}
&\hskip -3mm \sum_{k=1}^{m}\frac{(-1)^k q^{{k+1\choose 2}-rk}}{(q;q)_k (q;q)_{m-k}(xq^{k};q)_{n+1}}
-x^r\sum_{k=1}^{n}\frac{(-1)^k q^{{k+1\choose 2}+rk}}
{(q;q)_k (q;q)_{n-k}(xq^{k};q)_{m+1}}  \nonumber  \\
&=\frac{x^r}{(q;q)_n (x;q)_{m+1}}-\frac{1}{(q;q)_m (x;q)_{n+1}}.  \label{eq:pf11}
\end{align}
On the other hand, since
\begin{align*}
\sum_{k=1}^n \frac{q^k (q;q)_{k-1}}{(xq;q)_k}
=\sum_{k=1}^n\frac{1}{1-x}\left( \frac{(q;q)_{k-1}}{(xq;q)_{k-1}}-\frac{(q;q)_{k}}{(xq;q)_k}\right)
=\frac{1}{1-x}\left(1-\frac{(q;q)_{n}}{(xq;q)_n}\right),
\end{align*}
we have
\begin{align}
\sum_{k=1}^n \frac{q^k (q;q)_{k-1}}{(xq;q)_k}
-x^r\sum_{k=1}^m \frac{q^k (q;q)_{k-1}}{(xq;q)_k}
&=\frac{1-x^r}{1-x}+\frac{1}{1-x}\left(\frac{x^r(q;q)_{m}}{(xq;q)_m}-\frac{(q;q)_{n}}{(xq;q)_n}\right) \nonumber \\
&=\frac{1-x^r}{1-x}+\frac{x^r(q;q)_{m}}{(x;q)_{m+1}}-\frac{(q;q)_{n}}{(x;q)_{n+1}}.  \label{eq:pf12}
\end{align}
Combining \eqref{eq:pf11} and \eqref{eq:pf12}, we complete the proof of Theorem \ref{thm:uch1}.
\qed

\begin{thm}\label{thm:uch2}
For $m,n\geqslant 0$, there holds
\begin{align}
&\hskip -3mm  \sum_{k=1}^{m}\frac{(-1)^k q^{nk+{k+1\choose 2}}}
{(q;q)_k (q;q)_{m-k}(xq^{k};q)_{n+1}}
-\sum_{k=1}^{n}\frac{(-1)^k q^{mk+{k+1\choose 2}}}
{(q;q)_k (q;q)_{n-k}(xq^{k};q)_{m+1}}   \nonumber\\
&=\frac{1}{(q;q)_{m}(q;q)_{n}}
\left(\sum_{k=1}^n \frac{q^k (q;q)_{k-1}}{(xq;q)_k}
-\sum_{k=1}^m \frac{q^k (q;q)_{k-1}}{(xq;q)_k} \right). \label{uch:002}
\end{align}
\end{thm}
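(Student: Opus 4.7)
The plan is to reduce \eqref{uch:002} to a single closed-form identity, namely the auxiliary formula
\begin{align*}
\sum_{k=0}^{M}\frac{(-1)^k q^{Nk+{k+1\choose 2}}}{(q;q)_k (q;q)_{M-k}(xq^{k};q)_{N+1}} = \frac{(q;q)_{M+N}}{(q;q)_M (q;q)_N (x;q)_{M+N+1}},
\end{align*}
which should hold for all $M,N\geqslant 0$. The right-hand side is visibly symmetric in $M$ and $N$, so applying this formula with $(M,N)=(m,n)$ and with $(M,N)=(n,m)$ and equating the two yields
\begin{align*}
\sum_{k=0}^{m}\frac{(-1)^k q^{nk+{k+1\choose 2}}}{(q;q)_k (q;q)_{m-k}(xq^{k};q)_{n+1}}
=\sum_{k=0}^{n}\frac{(-1)^k q^{mk+{k+1\choose 2}}}{(q;q)_k (q;q)_{n-k}(xq^{k};q)_{m+1}}.
\end{align*}
Extracting the $k=0$ terms $\frac{1}{(q;q)_m(x;q)_{n+1}}$ and $\frac{1}{(q;q)_n(x;q)_{m+1}}$ from both sides of this, we see that \eqref{uch:002} is equivalent to showing that the LHS equals $\frac{1}{(q;q)_n(x;q)_{m+1}}-\frac{1}{(q;q)_m(x;q)_{n+1}}$.

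To prove the auxiliary formula, I would expand $\frac{1}{(xq^k;q)_{N+1}}$ as a power series in $x$ via \eqref{eq:euler2}, interchange summation, and apply \eqref{eq:euler1} to the inner sum over $k$, mimicking the First proof of Theorem~\ref{thm:01}:
\begin{align*}
\sum_{k=0}^M\frac{(-1)^k q^{Nk+{k+1\choose 2}}}{(q;q)_k (q;q)_{M-k}(xq^{k};q)_{N+1}}
&=\sum_{j=0}^\infty {N+j\brack j}_q x^j\frac{(q^{N+j+1};q)_M}{(q;q)_M}\\
&=\sum_{j=0}^\infty\frac{(q;q)_{M+N+j}}{(q;q)_j(q;q)_M(q;q)_N}x^j\\
&=\frac{(q;q)_{M+N}}{(q;q)_M(q;q)_N(x;q)_{M+N+1}},
\end{align*}
where the last step is a second invocation of \eqref{eq:euler2}. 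This is essentially equation \eqref{eq:new} with $p=q$ and $r=-N$; although this value lies outside the stated range $0\leqslant r\leqslant M$ in Theorem~\ref{thm:01}, the power-series computation above remains valid since the $q$-binomial theorem holds unconditionally.

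To finish, I would recycle the telescoping identity
$$\sum_{k=1}^{N}\frac{q^k(q;q)_{k-1}}{(xq;q)_k}=\frac{1}{1-x}-\frac{(q;q)_N}{(x;q)_{N+1}}$$
used in the proof of Theorem~\ref{thm:uch1}: subtracting its $N=m$ and $N=n$ instances cancels $\frac{1}{1-x}$ and leaves $\frac{(q;q)_m}{(x;q)_{m+1}}-\frac{(q;q)_n}{(x;q)_{n+1}}$, which upon dividing by $(q;q)_m(q;q)_n$ matches the value just computed for the LHS of \eqref{uch:002}. No serious obstacle is anticipated: the core content is the auxiliary formula, and the rest is straightforward bookkeeping with the $q$-binomial theorem together with a telescoping argument. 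The conceptual point is that both sums of \eqref{uch:002}, after restoring their $k=0$ terms, are specializations of a single formula whose closed form is manifestly symmetric in $M$ and $N$.
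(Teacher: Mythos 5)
Your proposal is correct, and it reaches the key intermediate identity by a genuinely different route than the paper. The paper obtains
\begin{align*}
\sum_{k=1}^{m}\frac{(-1)^k q^{nk+{k+1\choose 2}}}{(q;q)_k (q;q)_{m-k}(xq^{k};q)_{n+1}}
-\sum_{k=1}^{n}\frac{(-1)^k q^{mk+{k+1\choose 2}}}{(q;q)_k (q;q)_{n-k}(xq^{k};q)_{m+1}}
=\frac{1}{(q;q)_n (x;q)_{m+1}}-\frac{1}{(q;q)_m (x;q)_{n+1}}
\end{align*}
by specializing (letting $v\to\infty$ and $z\to 0$ in) a symmetric $q$-Pfaff--Saalsch\"utz-type identity imported from \cite{GZ,GZh}, whereas you derive it from the closed-form evaluation
\begin{align*}
\sum_{k=0}^{M}\frac{(-1)^k q^{Nk+{k+1\choose 2}}}{(q;q)_k (q;q)_{M-k}(xq^{k};q)_{N+1}}
={M+N\brack M}_q\frac{1}{(x;q)_{M+N+1}},
\end{align*}
proved directly with the two $q$-binomial expansions \eqref{eq:euler1}--\eqref{eq:euler2}; the manifest $M\leftrightarrow N$ symmetry of the right-hand side then does all the work once the $k=0$ terms are peeled off. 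Your computation checks out: the exponent $Nk+{k+1\choose 2}+kj$ equals ${k\choose 2}+k(N+j+1)$, so the inner sum over $k$ produces $(q^{N+j+1};q)_M/(q;q)_M$ by \eqref{eq:euler1}, and the $j$-sum reassembles to $(x;q)_{M+N+1}^{-1}$ by \eqref{eq:euler2}; you are also right that the restriction $0\leqslant r\leqslant m$ in Theorem~\ref{thm:01} plays no role here, since nothing in the power-series manipulation requires it. The concluding telescoping step (the $r=0$ case of \eqref{eq:pf12}) is identical in both arguments. What your route buys is self-containedness --- no appeal to the external identity of \cite{GZ,GZh} --- and a slightly stronger statement, since each sum is evaluated individually in closed form rather than only their difference; what the paper's route buys is brevity and the connection to the more general two-parameter identity of which \eqref{uch:002} is a degenerate case.
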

\pf Letting $v\to\infty$ and $z\to 0$ in the following identity (see \cite{GZ,GZh})
\begin{equation*}
\sum_{k=0}^{n}\frac{(q/z;q)_{k}(vq^m;q)_{k}(z;q)_{n-k}(xz;q)_m}
{(q;q)_k (v;q)_k (q;q)_{n-k}(xq^k;q)_{m+1}}z^k
=\sum_{k=0}^{m}\frac{(q/z;q)_{k}(vq^n;q)_{k}(z;q)_{m-k}(xz;q)_n}
{(q;q)_k (v;q)_{k}(q;q)_{m-k}(xq^k;q)_{n+1}}z^k,
\end{equation*}
we have
\begin{align*}
&\hskip -3mm \sum_{k=1}^{m}\frac{(-1)^k q^{nk+{k+1\choose 2}}}
{(q;q)_k (q;q)_{m-k}(xq^{k};q)_{n+1}}
-\sum_{k=1}^{n}\frac{(-1)^k q^{mk+{k+1\choose 2}}}
{(q;q)_k (q;q)_{n-k}(xq^{k};q)_{m+1}}   \\
&=\frac{1}{(q;q)_n (x;q)_{m+1}}- \frac{1}{(q;q)_m (x;q)_{n+1}}.
\end{align*}
The proof of Theorem \ref{thm:uch2} then follows from the $r=0$ case of \eqref{eq:pf12}.
\qed

It is easy to see that, when $m=0$ and $x=q^m$, both \eqref{uch:001} (with $r=0$) and \eqref{uch:002}
reduce to \eqref{eq:uch}.

\subsection{A generalization of Prodinger's identity}
In this section, we give a generalization of Prodinger's identity \eqref{eq:prodinger}.
\begin{thm}For $n\geqslant 0$ and $0\leqslant m, r\leqslant n$, there holds
\begin{align}
\sum_{\substack{k=0\\ k\neq m}}^{n}(-1)^{k-1}{n\brack k}_q\frac{q^{{k+1\choose 2}-rk}}{1-q^{k-m}}
=(-1)^m q^{{m+1\choose 2}-rm}{n\brack m}_q\left(r+\sum_{\substack{k=0\\ k\neq m}}^{n} \frac{q^{k-m}}{1-q^{k-m}}\right).
\label{eq:Prodinger-new}
\end{align}
\end{thm}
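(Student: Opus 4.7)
\medskip

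The plan is to derive \eqref{eq:Prodinger-new} as a limiting case of a specialization of Theorem \ref{thm:01} with $p=q$, using l'H\^opital's rule in the same spirit as the passage from \eqref{eq:new3} to \eqref{cor:newnew}. First I take Theorem \ref{thm:01} with $p=q$ and set $n=0$ on the right-hand side; this collapses that side to $x^r/(x;q)_{m+1}$ and, after multiplying by $(q;q)_m$ and relabelling $m\to n$, produces the single-pole identity
\begin{align*}
\sum_{k=0}^{n}(-1)^k{n\brack k}_q\frac{q^{{k+1\choose 2}-rk}}{1-xq^k}
=\frac{(q;q)_n\,x^r}{(x;q)_{n+1}},
\end{align*}
valid for $0\leqslant r\leqslant n$. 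This identity already exhibits the shape appearing in \eqref{eq:Prodinger-new}, except that the denominator $1-q^{k-m}$ has been replaced by $1-xq^k$.

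Next I substitute $x=yq^{-m}$ for $0\leqslant m\leqslant n$ and rewrite the identity as
\begin{align*}
\sum_{k=0}^{n}(-1)^k{n\brack k}_q\frac{q^{{k+1\choose 2}-rk}}{1-yq^{k-m}}
=\frac{(q;q)_n\,(yq^{-m})^r}{(yq^{-m};q)_{n+1}}.
\end{align*}
Both sides have a simple pole at $y=1$: on the left it comes solely from the $k=m$ term, contributing $(-1)^m{n\brack m}_q q^{{m+1\choose 2}-rm}/(1-y)$, while on the right it comes from the $j=m$ factor of $(yq^{-m};q)_{n+1}=(1-y)\prod_{j\neq m}(1-yq^{j-m})$. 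I will subtract the explicit $k=m$ term from both sides, reducing the problem to computing a $0/0$ limit as $y\to 1$ on the right-hand side.

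The final step is l'H\^opital on
$$
H(y):=\frac{(q;q)_n\,y^r q^{-rm}}{\prod_{j\neq m}(1-yq^{j-m})}.
$$
A direct evaluation using $\prod_{j=0,j\neq m}^{n}(1-q^{j-m})=(-1)^m q^{-{m+1\choose 2}}(q;q)_m(q;q)_{n-m}$ gives $H(1)=(-1)^m{n\brack m}_q q^{{m+1\choose 2}-rm}$, so the singular parts on the two sides agree as required. Logarithmic differentiation then yields
$$
\frac{H'(1)}{H(1)}=r+\sum_{\substack{j=0\\j\neq m}}^{n}\frac{q^{j-m}}{1-q^{j-m}},
$$
and combining this with $\lim_{y\to 1}(H(y)-H(1))/(1-y)=-H'(1)$ recovers precisely the right-hand side of \eqref{eq:Prodinger-new} after multiplication by $-1$ to account for the $(-1)^{k-1}$ sign in the statement. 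The only genuinely delicate step is the logarithmic-derivative computation, which must track the $y^r$ factor (producing the extra $r$ in the sum) consistently with the exclusion $k\neq m$; the rest is routine. The constraint $0\leqslant r\leqslant n$ in \eqref{eq:Prodinger-new} matches exactly the range in which the base identity holds, so no separate case analysis is needed.
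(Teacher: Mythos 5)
Your proposal is correct and follows essentially the same route as the paper: both start from the single-pole partial fraction identity obtained by specializing Theorem \ref{thm:01} (the $n=0$ case with $p=q$, relabelled), remove the $k=m$ term, and evaluate the resulting $0/0$ limit at $x=q^{-m}$ by l'H\^opital's rule. Your substitution $x=yq^{-m}$ and the logarithmic-differentiation bookkeeping of $H'(1)/H(1)$ are only presentational variants of the paper's direct limit computation in \eqref{eq:Prodinger-r}.
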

\pf From the partial fraction  decomposition
\begin{align*}
\sum_{k=0}^n\frac{(-1)^k q^{{k+1\choose 2}-rk}}{(q;q)_{k}(q;q)_{n-k}(1-xq^k)}
=\frac{x^r}{(x;q)_{n+1}},
\end{align*}
 which can also be obtained by letting $n=0$ and then replacing $(m,p)$ by $(n,q)$ in \eqref{eq:new},
we derive that
\begin{align}
&\hskip -3mm \sum_{\substack{k=0\\ k\neq m}}^{n}\frac{(-1)^k q^{{k+1\choose 2}-rk}}{(q;q)_{k}(q;q)_{n-k}(1-xq^k)}
\nonumber \\
&=\frac{x^r}{(x;q)_{n+1}}-\frac{(-1)^m q^{{m+1\choose 2}-rm}}{(q;q)_{m}(q;q)_{n-m}(1-xq^m)} \nonumber\\
&=\frac{x^r(q;q)_m(q;q)_{n-m}-(-1)^m q^{{m+1\choose 2}-rm}(x;q)_{m}(xq^{m+1};q)_{n-m}}
{(1-xq^m)(x;q)_{m}(xq^{m+1};q)_{n-m}(q;q)_m(q;q)_{n-m}}. \label{eq:Prodinger-r}
\end{align}
Letting $x\to q^{-m}$ in \eqref{eq:Prodinger-r}  and applying l'H\^opital's rule,
we obtain
\begin{align}
&\hskip -3mm \sum_{\substack{k=0\\ k\neq m}}^{n}
\frac{(-1)^{k} q^{{k+1\choose 2}-rk}}{(q;q)_k (q;q)_{n-k}(1-q^{k-m})}  \nonumber\\
&=\frac{rq^{-rm}(q;q)_m(q;q)_{n-m}+(-1)^m q^{{m+1\choose 2}-rm}
(q^{-m};q)_{m}(q;q)_{n-m}\sum_{k=0, k\neq m}^{n} \frac{q^{k-m}}{1-q^{k-m}}}
{-(q^{-m};q)_{m}(q;q)_{n-m}(q;q)_m(q;q)_{n-m}},
\end{align}
which is equivalent to \eqref{eq:Prodinger-new}.
\qed

Letting $m=0$ in \eqref{eq:Prodinger-new}, we get the following generalization of Van Hamme's identity
\eqref{eq:hamme}.
\begin{cor}For $n\geqslant 0$ and $0\leqslant r\leqslant n$, there holds
\begin{align}
\sum_{k=1}^{n}(-1)^{k-1}{n\brack k}_q\frac{q^{{k+1\choose 2}-rk}}{1-q^k}
=r+\sum_{k=1}^n \frac{q^k}{1-q^k}. \label{eq:rdiv}
\end{align}
\end{cor}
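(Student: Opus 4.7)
The plan is to obtain this corollary as the immediate specialization $m=0$ of the preceding theorem \eqref{eq:Prodinger-new}. First I would verify that the hypothesis $0\leqslant m,r\leqslant n$ collapses to $0\leqslant r\leqslant n$, that the summation range $\{0,1,\ldots,n\}\setminus\{m\}$ becomes $\{1,\ldots,n\}$, and that the summand $(-1)^{k-1}{n\brack k}_q q^{{k+1\choose 2}-rk}/(1-q^{k-m})$ specializes to the desired one. On the right, the prefactor becomes $(-1)^0 q^{{1\choose 2}-0}{n\brack 0}_q=1$, and the correction sum $\sum_{k\ne 0}q^{k-m}/(1-q^{k-m})$ becomes $\sum_{k=1}^n q^k/(1-q^k)$; adding the $r$ term produces the right-hand side of \eqref{eq:rdiv} exactly.

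As an independent route that avoids invoking the $m\ne 0$ parent theorem, one could start directly from the partial fraction decomposition
\begin{align*}
\sum_{k=0}^n \frac{(-1)^k q^{{k+1\choose 2}-rk}}{(q;q)_k(q;q)_{n-k}(1-xq^k)}=\frac{x^r}{(x;q)_{n+1}},
\end{align*}
which is itself the $n=0$, $p=q$ case of Theorem \ref{thm:01}. Isolating the $k=0$ term and multiplying through by $-(q;q)_n$ yields
\begin{align*}
\sum_{k=1}^n (-1)^{k-1}{n\brack k}_q \frac{q^{{k+1\choose 2}-rk}}{1-xq^k}=\frac{1}{1-x}\left(1-\frac{(q;q)_n\,x^r}{(xq;q)_n}\right).
\end{align*}
Then I would take $x\to 1$. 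Both sides are regular there, but the right-hand side is of indeterminate form $0/0$, so l'H\^opital's rule is needed; the logarithmic derivative $\frac{d}{dx}\log(xq;q)_n=-\sum_{k=1}^n q^k/(1-xq^k)$ evaluated at $x=1$ produces the Lambert-type sum $\sum_{k=1}^n q^k/(1-q^k)$, and differentiating $x^r$ at $x=1$ produces the extra $r$.

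I do not anticipate any serious obstacle: the specialization route is essentially a bookkeeping check on indices and prefactors, while the independent route requires only a careful l'H\^opital computation of the kind already used in the derivation of \eqref{cor:newnew} earlier in the paper. The slight subtlety is confirming that $r$ genuinely appears (rather than cancelling against another boundary contribution), which is clear from the fact that $\frac{d}{dx}x^r\big|_{x=1}=r$ is the only place where the parameter $r$ enters the limit.
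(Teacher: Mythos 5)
Your proposal is correct and follows the paper's own route: the paper obtains \eqref{eq:rdiv} precisely by setting $m=0$ in \eqref{eq:Prodinger-new}, exactly as in your first paragraph. Your alternative argument via the partial fraction decomposition and l'H\^opital's rule at $x\to 1$ is also valid, but it is just the $m=0$ instance of the paper's proof of \eqref{eq:Prodinger-new} itself (where the limit is taken at $x\to q^{-m}$), so it does not constitute a genuinely different method.
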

Letting $n\to\infty$ in \eqref{eq:rdiv}, we obtain
\begin{align*}
\sum_{k=1}^{\infty}(-1)^{k-1}\frac{q^{{k+1\choose 2}-rk}}{(q;q)_k(1-q^k)}
=r+\sum_{k=1}^\infty \frac{q^k}{1-q^k},\quad r\geqslant 0,
\end{align*}
which is a generalization of Uchimura's identity \eqref{eq:u81}.

%%%%%%%%%%%%%%%%%%%%%%%%%%%%%%
\subsection{A new generalization of Dilcher's identity}
Here we shall give a new generalization of Dilcher's identity \eqref{eq:dilch}.
\begin{thm}\label{thm:dilch-new}
For $m,n\geqslant 1$, and $0\leqslant r\leqslant m+n-1$, there holds
\begin{align}
&\hskip -3mm \sum_{k=1}^{n}(-1)^{k-1}{n\brack k}_q\frac{q^{{k\choose 2}+k(m-r)}}{(1-q^k)^m} \nonumber\\
&={r\choose m}+{r\choose m-1}\sum_{k_1=1}^n\frac{q^{k_1}}{1-q^{k_1}}
+{r\choose m-2}\sum_{1\leqslant k_1\leqslant k_2\leqslant n}\frac{q^{k_1+k_2}}{(1-q^{k_1})(1-q^{k_2})} \nonumber\\
&\quad{}+\cdots+{r\choose 0}\sum_{1\leqslant k_1\leqslant\cdots\leqslant k_m\leqslant n} \frac{q^{k_1+\cdots+k_m}}{(1-q^{k_1})\cdots(1-q^{k_m})}.
\label{eq:dilch-new}
\end{align}
\end{thm}
\pf
We proceed by induction on $m$ and on $n$. For $m=1$, the identity \eqref{eq:dilch-new} reduces to
\eqref{eq:rdiv}. Suppose that \eqref{eq:dilch-new} is true for some $m\geqslant 1$. We need to show that
it is also true for $m+1$, namely,
\begin{align}
&\hskip -3mm \sum_{k=1}^{n}(-1)^{k-1}{n\brack k}_q\frac{q^{{k\choose 2}+k(m+1-r)}}{(1-q^k)^{m+1}} \nonumber\\
&={r\choose m+1}+{r\choose m}\sum_{k_1=1}^n\frac{q^{k_1}}{1-q^{k_1}}
+{r\choose m-1}\sum_{1\leqslant k_1\leqslant k_2\leqslant n}\frac{q^{k_1+k_2}}{(1-q^{k_1})(1-q^{k_2})} \nonumber\\
&\quad{}+\cdots+{r\choose 0}\sum_{1\leqslant k_1\leqslant\cdots\leqslant k_{m+1}\leqslant n}
\frac{q^{k_1+\cdots+k_{m+1}}}{(1-q^{k_1})\cdots(1-q^{k_{m+1}})},\quad 0\leqslant r\leqslant m+n.
\label{eq:dilch-new2}
\end{align}
We shall prove this induction step \eqref{eq:dilch-new2} by induction on $n$, following the proofs
in \cite{Hoffman} and \cite[Theorem 4]{Dilcher}.
For $n=1$, the left-hand side of \eqref{eq:dilch-new2} is equal to
$$
\frac{q^{m+1-r}}{(1-q)^{m+1}},
$$
while the right-hand side of \eqref{eq:dilch-new2} is given by
$$
\sum_{k=0}^{m+1}{r\choose m+1-k}\left(\frac{q}{1-q}\right)^k
=\left(\frac{q}{1-q}\right)^{m+1-r}\left(1+\frac{q}{1-q}\right)^r
=\frac{q^{m+1-r}}{(1-q)^{m+1}},
$$
since $0\leqslant r\leqslant m+1$. This proves that \eqref{eq:dilch-new2} holds for $n=1$.
We now assume that \eqref{eq:dilch-new2} holds for $n-1$. In order to show that it also holds for $n$,
we need to check that the difference between \eqref{eq:dilch-new2} for $n$ and \eqref{eq:dilch-new2} for $n-1$
is a true identity. This difference is
\begin{align}
&\sum_{k=1}^{n}(-1)^{k-1}\frac{q^{{k\choose 2}+k(m+1-r)}}{(1-q^k)^{m+1}}
\left({n\brack k}_q-{n-1\brack k}_q\right) \nonumber\\
&={r\choose m}\frac{q^{n}}{1-q^{n}}
+{r\choose m-1}\frac{q^{n}}{1-q^{n}}\sum_{1\leqslant k_1\leqslant n}\frac{q^{k_1}}{1-q^{k_1}} \nonumber\\
&\quad{}+\cdots+{r\choose 0}\frac{q^{n}}{1-q^{n}}\sum_{1\leqslant k_1\leqslant\cdots\leqslant k_{m}\leqslant n}
\frac{q^{k_1+\cdots+k_{m}}}{(1-q^{k_1})\cdots(1-q^{k_{m}})}. \label{eq:dilch-new3}
\end{align}
Applying the relation
\begin{align*}
\frac{q^{{k\choose 2}+k(m+1-r)}}{1-q^k}\left({n\brack k}_q-{n-1\brack k}_q\right)
=\frac{q^{{k\choose 2}+k(m-r)+n}}{1-q^n}{n\brack k}_q
\end{align*}
one sees that \eqref{eq:dilch-new3} is just the induction hypothesis \eqref{eq:dilch-new}.
This proves that \eqref{eq:dilch-new2} holds for all $n\geqslant 1$,
and consequently \eqref{eq:dilch-new} holds for all $m\geqslant 1$
and all $n\geqslant 1$.  \qed

By the $q$-binomial theorem \eqref{eq:euler1}, we have
$$
\sum_{k=1}^{n}(-1)^{k-1}{n\brack k}_{q} q^{k\choose 2}=1\qquad (n\geqslant 1).
$$
Hence, by Dilcher's identity \eqref{eq:dilch}, from \eqref{eq:dilch-new} we deduce the following result.
\begin{cor}
For $m,n\geqslant 1$, and $0\leqslant r\leqslant m+n-1$, there holds
\begin{align*}
\sum_{k=1}^{n}(-1)^{k-1}{n\brack k}_q\frac{q^{{k\choose 2}+k(m-r)}}{(1-q^k)^m}
&=\sum_{j=0}^{m}{r\choose m-j}\sum_{k=1}^{n}(-1)^{k-1}{n\brack k}_q\frac{q^{{k\choose 2}+kj}}{(1-q^k)^j}.
\end{align*}
\end{cor}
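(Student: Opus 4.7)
The plan is to obtain the corollary as a direct consequence of Theorem~\ref{thm:dilch-new}, combined with Dilcher's identity~\eqref{eq:dilch} used in the reverse direction. Starting from the right-hand side of \eqref{eq:dilch-new}, I want to rewrite every sum of products $\sum_{1\leqslant k_1\leqslant\cdots\leqslant k_j\leqslant n}\frac{q^{k_1+\cdots+k_j}}{(1-q^{k_1})\cdots(1-q^{k_j})}$ back as a single alternating sum over $k$, which is exactly what Dilcher's identity \eqref{eq:dilch} affords.

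Concretely, I would proceed as follows. First, I apply Theorem~\ref{thm:dilch-new} to write the left-hand side of the corollary as $\sum_{j=0}^{m}{r\choose m-j}\,T_{j,n}$, where $T_{0,n}=1$ and, for $j\geqslant 1$,
\[
T_{j,n}=\sum_{1\leqslant k_1\leqslant\cdots\leqslant k_j\leqslant n}\frac{q^{k_1+\cdots+k_j}}{(1-q^{k_1})\cdots(1-q^{k_j})}.
\]
Then, for each $j$ with $1\leqslant j\leqslant m$, Dilcher's identity \eqref{eq:dilch} (with $m$ in \eqref{eq:dilch} replaced by $j$) gives
\[
T_{j,n}=\sum_{k=1}^{n}(-1)^{k-1}{n\brack k}_q\frac{q^{{k\choose 2}+kj}}{(1-q^k)^j},
\]
which is exactly the inner sum appearing on the right-hand side of the corollary.

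The only point that requires a separate comment is the case $j=0$. Here the claimed identity demands that the inner sum evaluate to $1$, i.e.\ that
\[
\sum_{k=1}^{n}(-1)^{k-1}{n\brack k}_{q}\,q^{{k\choose 2}}=1,
\]
and this is precisely the specialization of the $q$-binomial theorem \eqref{eq:euler1} at $x=1$, which is already recalled just above the statement of the corollary. Inserting this $j=0$ evaluation together with the rewritings of $T_{1,n},\ldots,T_{m,n}$ into $\sum_{j=0}^{m}{r\choose m-j}T_{j,n}$ yields exactly the right-hand side of the corollary.

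There is no real obstacle here: once one recognizes that Theorem~\ref{thm:dilch-new} expresses the target sum as a binomial combination of the multiple sums appearing in \eqref{eq:dilch}, the corollary is a purely formal rewriting. The only care needed is bookkeeping the index shift (the coefficient of $T_{j,n}$ is ${r\choose m-j}$) and handling the $j=0$ term via the $q$-binomial identity, since \eqref{eq:dilch} is stated for $m\geqslant 1$.
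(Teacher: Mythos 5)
Your argument is correct and coincides with the paper's own proof: both deduce the corollary from Theorem~\ref{thm:dilch-new} by rewriting each multiple sum via Dilcher's identity~\eqref{eq:dilch} and evaluating the $j=0$ term with the $q$-binomial theorem~\eqref{eq:euler1}. Nothing further is needed.
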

%%%%%%

%%%%%%%%%%%%%%%%%%%%%%%%%%%%%%%%%%%%%%%%%%%%%%%
\vskip 5mm
\noindent{\bf Acknowledgments.} This work was partially
supported by the Fundamental Research Funds for the Central Universities,
the National Natural Science Foundation of China (grant 11371144).

\end{document}